\theoremstyle{definition}
\newtheorem{defn}{Definition}[section]
\newtheorem{quest}{Question}
\theoremstyle{plain}
\newtheorem{thm}[defn]{Theorem}
\newtheorem{lem}[defn]{Lemma}
\newtheorem{cor}[defn]{Corollary}
\newtheorem{prop}[defn]{Proposition}
\theoremstyle{remark}
\newtheoremstyle{case}{}{}{}{}{}{:}{ }{}
\theoremstyle{case}
\DeclareMathOperator{\ii}{\mathrm{i}}
\DeclareMathOperator{\N}{\mathbb{N}}
\DeclareMathOperator{\R}{\mathbb{R}}
\DeclareMathOperator{\Z}{\mathbb{Z}}
\DeclareMathOperator{\Q}{\mathbb{Q}}
\DeclareMathOperator{\D}{\mathbf{D}}
\DeclareMathOperator{\IOCG}{\mathrm{IOCG}}
\DeclareMathOperator{\PST}{\mathrm{PST}}
\DeclareMathOperator{\MST}{\mathrm{MST}}
\DeclareMathOperator{\UST}{\mathrm{UST}}
\DeclareMathOperator{\Sp}{Sp}
\begin{document}
\title{\textbf{Quantum state transfer on integral oriented circulant graphs}}

\author{Xing-Kun Song%\orcidlink{0000-0003-4149-4759}
\footnote{Email address: \href{mailto:xksong@126.com}{xksong@126.com} (X.-K. Song)} \\[2mm]
  \small School of Mathematics, East China University of Science and Technology, \\
  \small Shanghai 200237, P.R. China\\}
\date{}
\maketitle

\begin{abstract}
  An oriented circulant graph is called integral if all eigenvalues of its Hermitian adjacency matrix are integers. The main purpose of this paper is to investigate the existence of perfect state transfer ($\PST$ for short) and multiple state transfer ($\MST$ for short) on integral oriented circulant graphs. Specifically, a characterization of $\PST$ (or $\MST$) on integral oriented circulant graphs is provided. As an application, we also obtain a closed-form expression for the number of integral oriented circulant graphs with fixed order having $\PST$ (or $\MST$).
\end{abstract}

\begin{flushleft}
  \textbf{Keywords:} Oriented circulant graphs; integral graphs; perfect state transfer; multiple state transfer.
\end{flushleft}
\textbf{AMS Classification:} 05C50; 15A18; 81P45; 81P68

\section{Introduction}

All graphs considered in this paper have neither loops nor multiple edges. Let $\Gamma=(V, E, A)$ be a \emph{mixed graph} (introduced by Harary and Palmer~\cite{HP66}) with vertex set $V$, undirected edge set $E$, and arc (directed edge) set $A$. In particular, we say that $\Gamma$ is \emph{oriented} (resp. \emph{undirected}) if it contains only directed (resp.~undirected) edges. The \emph{Hermitian adjacency matrix} of $\Gamma$, introduced by Liu and Li~\cite{LL15}, and independently by Guo and Mohar~\cite{GM17}, is defined as $H_{\Gamma}=(h_{uv})_{u,v\in V}$, where
\begin{equation*}
  h_{uv} =
  \begin{cases}
    1,    & \text{ if }	(u,v)\in E, \\
    \ii,  & \text{ if } (u,v)\in A, \\
    -\ii, & \text{ if } (v,u)\in A, \\
    0,    & \text{ otherwise}.
  \end{cases}
\end{equation*}
Here $\ii=\sqrt{-1}$. Since $H_{\Gamma}$ is a Hermitian matrix, all its eigenvalues called the \emph{Hermitian eigenvalues} of $\Gamma$ are real. The multiset of Hermitian eigenvalues of $\Gamma$ is called the \emph{Hermitian spectrum} of $\Gamma$, and denoted by $\Sp_H(\Gamma)$. For more results on Hermitian eigenvalues of mixed graphs, we refer the reader to \cite{Mo16,YWGQ20}.

A mixed graph is called \emph{integral} if all its Hermitian eigenvalues are integers. The problem of characterizing integral (undirected) graphs was proposed by Harary and Schwenk~\cite{HS74} in 1974. It has been discovered  that integral graphs can play a role in the so-called perfect state transfer (defined below) in quantum spin networks. See~\cite{BCRSS02} for a survey on integral graphs.

The concept of perfect state transfer was introduced by Bose~\cite{Bo03} in 2003. Let $\Gamma$ be a mixed graph, and let $H_\Gamma$ be the Hermitian adjacency matrix of $\Gamma$. We say that $\Gamma$ has \emph{perfect state transfer} ($\PST$ for short) from $u$ to $v$ if there exists a time $t \in \R$ and a complex unimodular scalar $\gamma$ such that
\begin{equation}\label{eq:::1}
  U(t)\mathbf{e}_u= \gamma \mathbf{e}_v,
\end{equation}
where $U(t) = \exp(\ii t H_\Gamma)$ is the \emph{transition matrix} of $H_{\Gamma}$, and $\ii = \sqrt{-1}$. Here $\Gamma$ is called \emph{phase} of $\PST$. In particular,  if $u = v$ in \Cref{eq:::1}, we say that $\Gamma$ is \emph{periodic at vertex $u$}. Furthermore, if $U(t)$ is a scalar multiple of the identity matrix, then $\Gamma$ is \emph{periodic}.
In recent years, the study of $\PST$ on graphs has aroused a great deal of interest, and it is well known that graphs having $\PST$ are rare. In 2011--2012, Godsil~\cite{Go08,Go12a,Go12b} provided some basic properties for periodicity and perfect state transfer of graphs. For more details on $\PST$, we refer the reader to~\cite{CG21b}.

In quantum informatics and quantum computing, there has been tremendous interest in $\PST$ on Cayley graphs. Let $G$ be a finite group with identity element $e$, and let  $S$ be a subset of $G\setminus \{e\}$. The \emph{Cayley graph} $\mathrm{Cay}(G,S)$ is defined as the graph with vertex set $G$ and arc set $E=\{(x,y)\in G\times G : yx^{-1} \in S\}$. In particular, if $S=S^{-1}$ then  $\mathrm{Cay}(G,S)$ is an undirected graph, and if $S\cap S^{-1}=\emptyset$ then $\mathrm{Cay}(G,S)$ is an oriented graph. In ~\cite{CCL20,CFT21,CF21,TFC19}, Cao et al. investigated $\PST$ on Cayley graphs over abelian groups or dihedral groups. For a comprehensive survey about $\PST$ on Cayley graphs, we refer the reader to \cite[Chapter 9]{LZ18}.

A circulant graph is a Cayley graph over a cyclic group. Let $\mathbb{Z}_n$ be the additive group of integers module $n$, and let $\mathcal{C}$ be a subset of $\mathbb{Z}_n\setminus \{0\}$. The \emph{circulant graph} $G(\mathbb{Z}_n,\mathcal{C})$ has vertex set $\mathbb{Z}_n$ and arc set $\{(a,b): b-a  \in \mathcal{C}, a,b\in \mathbb{Z}_n\}$. Here  $\mathcal{C}$ is called the \emph{symbol} of $G(\mathbb{Z}_n,\mathcal{C})$. In 2003, So~\cite{So06} characterized all integral undirected circulant graphs. Based on this result, Ba\v{s}i\'{c}~\cite{Ba13} gave a characterization for $\PST$ on integral undirected circulant graphs. For more results about $\PST$ on integral undirected circulant graphs, see~\cite{ANOPRT10,BP09,BPS09,PB11,SSS07}. With regard to oriented circulant graphs, it is natural to ask the following question.
\begin{quest}\label{que::1}
  Which oriented circulant graphs have $\PST$?
\end{quest}
According to Godsil and Lato~\cite{GL20}, if an oriented graph has $\PST$, then all its Hermitian eigenvalues are integers or integer multiples of $\sqrt{\Delta}$, where $\Delta$ is a square-free integer. In this paper, our first goal is to give an answer to \Cref{que::1} for integral oriented circulant graphs.

Very recently, Kadyan and Bhattacharjya~\cite{MB21a} provided a characterization for integral oriented circulant graphs.
\begin{thm}{\bf (See~\cite[Theorem 5.3]{MB21a})}\label{thm::IOCG}
  Let $\Gamma=G(\mathbb{Z}_n,\mathcal{C})$ be an oriented circulant graph.
  \begin{enumerate}[label = \bf(\roman*)]
    \item If $n\not\equiv 0 \pmod 4$, then $\Gamma$ is
          integral if and only if $\mathcal{C}=\emptyset$.
    \item If $n\equiv 0 \pmod 4$, then $\Gamma$ is
          integral if and only if $\mathcal{C}= \bigcup\limits_{d\in \D}S_n(d)$, where $\D \subseteq \{ d: d\mid n/4\}$, and $S_n(d) = G_n^{1}(d)$ or $S_n(d)=G_n^{3}(d)$.
  \end{enumerate}
  Here $G_n^r(d)=\{ dk: k\equiv r \pmod 4, \gcd(dk,n )= d\}=d G_{n/d}^r(1)$.
\end{thm}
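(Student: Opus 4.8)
The plan is to reduce integrality to a Galois-invariance condition on the eigenvalues of $H_\Gamma$ and then decode that condition combinatorially. Since $H_\Gamma$ is circulant (its $(u,v)$ entry depends only on $v-u$), its eigenvalues are
\[
  \lambda_j \;=\; \ii\sum_{c\in\mathcal{C}}\bigl(\omega^{jc}-\omega^{-jc}\bigr),\qquad j\in\mathbb{Z}_n,
\]
where $\omega=e^{2\pi\ii/n}$; equivalently $\lambda_j=\sum_{c}w(c)\,\omega^{jc}$ for the odd weight $w$ with $w=\ii$ on $\mathcal{C}$, $w=-\ii$ on $-\mathcal{C}$, and $w=0$ elsewhere (well defined since an oriented graph has $\mathcal{C}\cap(-\mathcal{C})=\emptyset$). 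As $H_\Gamma$ is Hermitian with entries in $\mathbb{Z}[\ii]$, its characteristic polynomial lies in $\mathbb{Z}[x]$, so each $\lambda_j$ is a real algebraic integer lying in a cyclotomic field; hence $\lambda_j\in\mathbb{Z}$ precisely when it is fixed by the relevant Galois group. The whole problem thus becomes: for which $\mathcal{C}$ are all the $\lambda_j$ Galois-invariant?

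For part (i) I would argue by contradiction with a single automorphism. When $4\nmid n$ one has $\ii\notin\mathbb{Q}(\zeta_n)$, so inside $\mathbb{Q}(\zeta_{\operatorname{lcm}(n,4)})$ there is a Galois automorphism $\sigma$ fixing $\zeta_n$ (hence $\omega$) while sending $\ii\mapsto-\ii$; concretely, writing the Galois group as a direct product using the coprimality of $4$ with the odd part of $n$, take the factor acting as $3$ on $\zeta_4$ and trivially on the rest. Then $\sigma(\lambda_j)=-\lambda_j$ for every $j$, so integrality forces $\lambda_j=0$ for all $j$, i.e. $H_\Gamma=0$ and $\mathcal{C}=\emptyset$; the converse is immediate.

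For part (ii), assume $4\mid n$, so $\ii=\zeta_n^{\,n/4}\in\mathbb{Q}(\zeta_n)$ and every $\lambda_j\in\mathbb{Q}(\zeta_n)$. The key computation is the action of $\sigma_a\colon\zeta_n\mapsto\zeta_n^a$ (for $a\in(\mathbb{Z}/n)^{*}$) on $\lambda_j$: since $\sigma_a(\ii)=\ii^a$ and $a$ is odd, a short manipulation gives $\sigma_a(\lambda_j)=\chi(a)\,\lambda_{aj}$, where $\chi$ is the nontrivial character modulo $4$ ($+1$ on $a\equiv1$, $-1$ on $a\equiv3\pmod 4$). Thus $\Gamma$ is integral iff $\lambda_{aj}=\chi(a)\lambda_j$ for all $a,j$, and transporting this through the invertible Fourier correspondence $w\leftrightarrow(\lambda_j)_j$ converts it into the pointwise condition
\[
  w(ac)=\chi(a)\,w(c)\qquad\text{for all }a\in(\mathbb{Z}/n)^{*},\ c\in\mathbb{Z}_n,
\]
i.e. $w$ is $\chi$-equivariant for the multiplication action of the units. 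This is the oriented analogue of So's characterization of integral undirected circulant graphs, the sign character $\chi$ being the new ingredient.

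It remains to solve this equivariance orbit by orbit: the orbits of $(\mathbb{Z}/n)^{*}$ on $\mathbb{Z}_n$ are exactly the sets $G_n(d)=\{c:\gcd(c,n)=d\}$ for $d\mid n$. On a fixed orbit, $w$ is determined by its value at $c_0=d$ via $w(ad)=\chi(a)w(d)$; this is consistent iff the stabilizer $\{a\equiv1\pmod{n/d}\}$ lies in $\ker\chi$, which I would show holds exactly when $4\mid n/d$, i.e. $d\mid n/4$ (otherwise one exhibits a stabilizer unit that is $3\bmod 4$, forcing $w\equiv0$ on that orbit). For an admissible $d$ the three choices $w(d)\in\{0,\ii,-\ii\}$ yield respectively $\mathcal{C}\cap G_n(d)=\emptyset$, $G_n^1(d)$, or $G_n^3(d)$, where the identification with $G_n^r(d)$ uses that, since $4\mid n/d$, the reduction $(\mathbb{Z}/n)^{*}\to(\mathbb{Z}/(n/d))^{*}$ carries $\{a\equiv1\pmod4\}$ onto $\{b\equiv1\pmod4\}$. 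Assembling the orbits gives exactly $\mathcal{C}=\bigcup_{d\in\D}S_n(d)$ with $\D\subseteq\{d:d\mid n/4\}$ and $S_n(d)\in\{G_n^1(d),G_n^3(d)\}$, and since every implication above is reversible the same analysis proves the converse. The main obstacle is this orbit/stabilizer step: pinning down precisely that $\chi$ is trivial on the stabilizer iff $d\mid n/4$, and matching the resulting equivariant $w$ to the explicit symbol sets $G_n^r(d)$.
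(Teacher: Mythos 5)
The paper itself contains no proof of \Cref{thm::IOCG}: it is quoted directly from Kadyan and Bhattacharjya \cite[Theorem 5.3]{MB21a}, so there is no internal argument to compare yours against, and your proposal has to be judged on its own merits. On those merits it is correct and essentially complete, and it follows the natural Galois/Fourier-equivariance route (the oriented analogue of So's method for undirected integral circulants). Part (i) works in both subcases: for $n$ odd this is immediate, and for $n\equiv 2\pmod 4$, writing $m=n/2$, one has $\zeta_n=\zeta_{2m}=-\zeta_m^{(m+1)/2}\in\Q(\zeta_m)$, so the automorphism acting as $3$ on $\zeta_4$ and trivially on $\zeta_m$ indeed fixes $\zeta_n$ while negating $\ii$, forcing every eigenvalue to vanish and hence $H_\Gamma=0$. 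In part (ii), the identity $\sigma_a(\lambda_j)=\chi(a)\lambda_{aj}$, the transport through the inverse discrete Fourier transform to $w(ac)=\chi(a)w(c)$ (which uses $\chi(a^{-1})=\chi(a)$, valid because $aa^{-1}\equiv 1\pmod 4$ when $4\mid n$), and the orbit decomposition $\Z_n=\bigcup_{d\mid n}G_n(d)$ are all sound; note also that equivariance automatically reproduces the Hermitian constraint $w(-c)=-w(c)$ since $\chi(-1)=-1$.

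The one step you flag as the main obstacle — showing $\chi$ is trivial on the stabilizer $\{a\in(\Z/n)^{*}:a\equiv 1\pmod{n/d}\}$ exactly when $4\mid n/d$ — does close cleanly, and you should write it out. The forward direction is trivial. For the converse, suppose $4\nmid n/d$ and let $m$ be the odd part of $n$; by CRT choose $a\equiv 1\pmod m$ and $a\equiv 3\pmod 4$ (the moduli are coprime, and since $4\mid n$ any lift of this class modulo $n$ is odd and coprime to $m$, hence a unit modulo $n$). If $n/d$ is odd then $n/d\mid m$ gives $a\equiv 1\pmod{n/d}$; if $n/d=2m'$ with $m'$ odd then $a\equiv 1\pmod{m'}$ together with $a$ odd gives $a\equiv 1\pmod{2m'}$. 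Either way $a$ is a stabilizer element with $\chi(a)=-1$, forcing $w\equiv 0$ on that orbit. With that lemma in place, your identification of the three choices $w(d)\in\{0,\ii,-\ii\}$ with $\emptyset$, $G_n^1(d)$, $G_n^3(d)$ (via surjectivity of $(\Z/n)^{*}\to(\Z/(n/d))^{*}$ preserving residues modulo $4$ when $4\mid n/d$) completes both directions. For comparison, the source \cite{MB21a} organizes the same content differently: it works with explicit ``sine sums'' over candidate symbol sets and characterizes when such sums are integers (machinery the present paper partially reproduces in its Section 2), rather than phrasing integrality as $\chi$-equivariance of the symbol function; your formulation is more conceptual and makes the orbit structure the visible reason for the answer.
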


Let $G(\mathbb{Z}_n,\mathcal{C})$ be a non-empty integral oriented circulant graph.  By~\Cref{thm::IOCG}, we see that $n\equiv 0 \pmod 4$, and the symbol $\mathcal{C}= \bigcup_{d\in \D}S_n(d)$ corresponds to  the mapping $\sigma:\D\rightarrow  \{1,-1\}$ where $\sigma(d)=1$ if $S_n(d) = G_n^{1}(d)$ and $\sigma(d)=-1$ if $S_n(d)=G_n^{3}(d)$. Therefore, each non-empty integral oriented circulant graph $G(\mathbb{Z}_n,\mathcal{C})$ is determined by its order $n$ ($n\equiv 0 \pmod 4$),  a set $\D$ of positive divisors of $\frac{n}{4}$, and a mapping from $\D$ to $\{1,-1\}$. For this reason, we use $\IOCG_n(\D,\sigma)$ instead of $G(\mathbb{Z}_n,\mathcal{C})$ in what follows.  For example, if $G(\mathbb{Z}_8,\mathcal{C})=\IOCG_8(\{1,2\},\{1,-1\})$, then $G_8^{1}(1)=\{k:k\equiv 1\pmod 4, \gcd(k,8)=1\}=\{1,5\}$, $G_8^{3}(2)=2G_4^{3}(1)=2\{k:k\equiv 3\pmod 4, \gcd(k,4)=1\}=2\{3\}=\{6\}$, and hence $\mathcal{C}=G_8^{1}(1)\cup G_8^{3}(2)=\{1,5,6\}$.

Let $\vartheta_2(n)$ be the largest positive integer $\alpha$ such that $2^{\alpha} \mid n$. We define $\D_i = \{d\in \D \mid \vartheta_2(n/d)=i, 0\leq i\leq \vartheta_2(n)\}$, where $\D\subseteq \D_n=\{d : d \mid n, 1 \leq d < n\}$. The first result of this paper is as below.

\begin{thm}\label{thm::1.6}
  Let $\Gamma=\IOCG_n(\D,\sigma)$ be an integral oriented circulant graph. Then the following two statements are equivalent:
  \begin{enumerate}[label = \bf(\roman*)]
    \item $\Gamma$ has $\PST$ between vertices $b+n/2$ and $b$, for all $b \in \Z_n$;\label{thm1.2.1}
    \item $n\in 4\N$ and $\D_2=\{n/4\}$.\label{thm1.2.2}
  \end{enumerate}
\end{thm}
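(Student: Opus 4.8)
The plan is to use that the Hermitian adjacency matrix of a circulant graph is itself a circulant matrix, hence simultaneously diagonalized by the Fourier vectors $v_j=(\omega^{ja})_{a\in\Z_n}$ with $\omega=e^{2\pi\ii/n}$, the corresponding eigenvalues being $\lambda_j=\sum_{c\in\mathcal C}h_{0c}\omega^{jc}=-2\sum_{c\in\mathcal C}\sin(2\pi jc/n)$. Since translation by $b$ is a graph automorphism commuting with $H_\Gamma$ (and hence with $U(t)$), PST from $b+n/2$ to $b$ for a single $b$ already gives it for all $b$; so it suffices to treat $b=0$. Expanding $\mathbf e_0=\tfrac1n\sum_j v_j$ and $\mathbf e_{n/2}=\tfrac1n\sum_j(-1)^j v_j$, the condition $U(t)\mathbf e_0=\gamma\mathbf e_{n/2}$ becomes, eigenvector by eigenvector, $e^{\ii t\lambda_j}=\gamma(-1)^j$ for all $j$. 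From $\lambda_{n-j}=-\lambda_j$ we get $\lambda_0=\lambda_{n/2}=0$; taking $j=0$ forces $\gamma=1$, and then $j=n/2$ forces $(-1)^{n/2}=1$, i.e. $4\mid n$. This already delivers the necessity of $n\in4\N$ in (ii) and reduces (i) to: there exists $t$ with $e^{\ii t\lambda_j}=(-1)^j$ for all $j$.

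Next I would unpack the eigenvalues via \Cref{thm::IOCG}. Writing $F(j,m,r)=\sum_{k\in G_m^r(1)}\sin(2\pi jk/m)$ and using $G_n^r(d)=dG_{n/d}^r(1)$, together with the symmetry $k\mapsto m-k$ (valid since $4\mid m$) giving $F(j,m,3)=-F(j,m,1)$, one obtains $\lambda_j=-2\sum_{d\in\D}\sigma(d)F(j,n/d)$, where $F(j,m):=F(j,m,1)$. Evaluating $2F(j,m)=\mathrm{Im}\sum_{\gcd(k,m)=1}\psi(k)\omega_m^{jk}$ as a twisted Gauss sum, with $\psi$ the nonprincipal character modulo $4$ and $\omega_m=e^{2\pi\ii/m}$, and splitting the $2$-part from the odd part by CRT, I would establish the two structural facts that drive the proof: (a) every $\lambda_j$ is even, so I may write $\lambda_j=2\mu_j$; and (b) for odd $j$ each term with $8\mid(n/d)$ vanishes, whereas each term with $\vartheta_2(n/d)=2$ equals $\pm c_{(n/d)/4}(j)$, a Ramanujan sum depending only on $\gcd\big(j,(n/d)/4\big)$. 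Hence for odd $j$ only $\D_2$ contributes, and $\mu_j=\sum_{d\in\D_2}\eta_d\,c_{(n/d)/4}(j)$ with signs $\eta_d=\pm1$ constant on each residue class of $j$ modulo $4$.

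With $\lambda_j=2\mu_j$ the spectral condition $e^{\ii t\lambda_j}=(-1)^j$ reads $s\mu_j\equiv j\pmod 2$ (an integer of the correct parity) for all $j$, where $s=2t/\pi$. Because $s\mu_j\in\Z$ for every $j$ holds exactly when $s\in\tfrac1g\Z$ with $g=\gcd_j\mu_j$, I would set $s=\ell/g$ and $\nu_j=\mu_j/g$, so the requirement becomes $\ell\nu_j\equiv j\pmod 2$; this forces $\ell$ odd and yields the clean criterion that $\Gamma$ has the desired PST if and only if $\nu_j\equiv j\pmod 2$ for all $j$. Equivalently, $\vartheta_2(\lambda_j)$ must take one common value on all odd $j$ and a strictly larger value on every even $j$ with $\lambda_j\neq0$.

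Finally I would match this valuation pattern to $\D_2$. For the forward direction, if $\D_2=\{n/4\}$ then for odd $j$ the sole contribution is the $d=n/4$ term $F(j,4)=\sin(\pi j/2)=\pm1$, so $\mu_j=\pm1$ is odd, while every $d$ contributes an even amount at even $j$; thus $\vartheta_2(\lambda_j)=1$ on odds and $\ge2$ on evens, and $t=\pi/2$ realizes the PST. For the converse I argue contrapositively: if $\D_2=\emptyset$ then $\mu_j=0$ for every odd $j$, so $\lambda_1=0$ and $e^{\ii t\lambda_1}=1\neq-1$; and if some $d_0\in\D_2$ has $(n/d_0)/4>1$, I would choose, within a fixed class modulo $4$ and by CRT, odd indices $j$ with prescribed values of $\gcd\big(j,(n/d)/4\big)$ and compare the index divisible by the relevant $\mathrm{lcm}$ (where $\mu_j$ is even, since each $\phi((n/d)/4)$ is even) against an index coprime to the odd part, thereby breaking constancy of $\vartheta_2(\lambda_j)$ on the odd indices. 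This last step is the main obstacle: the Ramanujan sums attached to distinct $d\in\D_2$ share prime divisors, so one cannot isolate a single $d_0$ by independent CRT choices, and a careful prime-by-prime $2$-adic valuation analysis of the combined sum is where the bulk of the technical work lies.
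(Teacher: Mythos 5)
Your reduction is sound and, in substance, it retraces the paper's own route: the Fourier diagonalization and the entrywise condition $e^{\ii t\lambda_j}=\gamma(-1)^j$ correspond to \Cref{eq::1} and \Cref{thm::3.1}; your criterion that $\vartheta_2(\lambda_j)$ be constant on odd $j$ and strictly larger on even $j$ is equivalent (because $\lambda_0=0$) to the constancy of $\vartheta_2(\mu_{j+1}-\mu_j)$ in \Cref{lem::3.2}; your structural facts (a) and (b) are the paper's \Cref{thm::1.4} and \Cref{thm::1.5}; and your direction (ii) $\Rightarrow$ (i), giving $\mu_j=\pm1$ at odd $j$, even values at even $j$, and $\PST$ at $t=\pi/2$, is exactly the paper's argument via \Cref{lem::3.5}. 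That half, together with the degenerate case $\D_2=\emptyset$ of the converse, is complete and correct in your proposal.

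The genuine gap is the remaining, and only non-trivial, case of the converse (i) $\Rightarrow$ (ii): when some $d_0\in\D_2$ satisfies $n/(4d_0)>1$, you must show that the parity of $\mu_j=\sum_{d\in\D_2}\eta_d\,c_{n/(4d)}(j)$ is non-constant as $j$ runs over odd residues, and you explicitly leave this unproven, calling it ``the main obstacle \dots where the bulk of the technical work lies.'' This is precisely the content of the paper's \Cref{lem::3.3} (which feeds \Cref{lem::3.4}): a sum of Ramanujan sums over distinct odd moduli has constant parity at odd arguments only when the set of moduli is $\{1\}$; the paper establishes it by comparing evaluations at arguments $q$ that hit, respectively miss, the moduli, using \Cref{prop::2.2}. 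The obstruction you name --- that the moduli $m_d=n/(4d)$ share prime factors, so independent CRT choices cannot isolate a single summand --- is real, but it is handled by a parity analysis of the whole sum rather than term-by-term isolation: for odd $m$, von Sterneck's formula gives $c_m(q)=\sum_{k\mid\gcd(q,m)}k\,\mu(m/k)$, hence $c_m(q)\equiv\#\{k\mid\gcd(q,m): m/k \text{ squarefree}\}\pmod 2$, and one compares $q$ coprime to all $m_d$ with $q$ divisible by all of them (plus intermediate choices of $\gcd$ when some $m_d$ is not squarefree). Until a lemma of this kind is supplied, the implication (i) $\Rightarrow$ (ii) --- the heart of the theorem --- is not established, so the proposal as written is incomplete.
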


Up to now, there are few results about $\PST$ on oriented graphs. For undirected graphs, Kay~\cite{Ka11} proved that $\PST$ (whose adjacency matrix is real symmetric) occurs only between two non-disjoint pairs of vertices. Unlike undirected graphs, some oriented graphs having $\PST$ between multiple vertices were found. Here, we introduce some results in recent years. We say that a graph admits \emph{universal state transfer} ($\UST$ for short) if there exists $\PST$ between every pair of vertices. Cameron et al.~\cite{CFGHST14} proposed this definition and showed that only the complete graphs $K_2$ and $K_3$ with complex Hermitian adjacency matrices which have $\UST$. Connelly et al.~\cite{CGKST17} conjectured that $K_3$ is the only nontrivial unweighted oriented graph with $\UST$. It is clear that graphs having $\UST$ are more rare than $\PST$. For this reason, Godsil and Lato~\cite{GL20} proposed the concept of multiple state transfer. A graph is said to have \textit{multiple state transfer} ($\MST$ for short) if it contains a vertex subset $S$ such that $\PST$ occurs between each pair of vertices in $S$.  Also, they gave some examples of graphs having $\MST$.
For some results of $\PST$ and $\MST$ on oriented graphs, we refer the reader to \cite{La19}.
In~\cite{La19}, Lato asked the following question.

\begin{quest}\label{que::2}
  Can we build infinite families of graphs with $\MST$?
\end{quest}

In the second part of this paper, we focus on studying~\Cref{que::2}, and obtain a characterization for integral oriented circulant graphs having $\MST$.

\begin{thm}\label{thm::1.7}
  Let $\Gamma=\IOCG_n(\D,\sigma)$ be an integral oriented circulant graph. Then the following two statements are equivalent:
  \begin{enumerate}[label = \bf(\roman*)]
    \item $\Gamma$ has $\MST$ between vertices $b$, $b+n/4$, $b+n/2$, $b+3n/4$, for all $b \in \Z_n$;\label{thm1.3.1}
    \item $n\in 8\N$, $\D_2= \{n/4\}$, and $\D_3= \{n/8\}$.\label{thm1.3.2}
  \end{enumerate}
\end{thm}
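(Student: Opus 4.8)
The plan is to reduce the $\MST$ statement to a single $\PST$ condition, translate that into congruences on the integer eigenvalues, and then read those congruences off the arithmetic of $\D$. Since $\Gamma$ is a Cayley graph over $\Z_n$, its Hermitian adjacency matrix $H_\Gamma$ is circulant, hence diagonalized by the characters of $\Z_n$ with eigenvalues $\lambda_j=-2\sum_{c\in\mathcal C}\sin(2\pi jc/n)$, $j\in\Z_n$, which are integers by hypothesis. First I would note that $U(t)=\exp(\ii tH_\Gamma)$ commutes with the cyclic shift, so if $U(t)\mathbf e_b=\gamma\mathbf e_{b+n/4}$ at one time $t$ then $U(2t)\mathbf e_b=\gamma^2\mathbf e_{b+n/2}$ and $U(3t)\mathbf e_b=\gamma^3\mathbf e_{b+3n/4}$, while the pair $\{b,b+n/4\}$ already forces $\PST$ at difference $n/4$. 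Thus the $\MST$ of statement \ref{thm1.3.1} is equivalent to $\PST$ between $0$ and $n/4$, and by the circulant diagonalization this holds iff there is a $t$ with
\[
e^{\ii t\lambda_j}=\omega^{-jn/4}=\ii^{-j}\qquad(\forall\, j\in\Z_n),
\]
where $\omega=\exp(2\pi\ii/n)$ and the phase is pinned to $1$ because $\lambda_0=0$.

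Next I would compute the eigenvalues divisor by divisor. Expanding the symbol through \Cref{thm::IOCG}, the contribution of a single $d\in\D$ to $\lambda_j$ is a twisted Gauss sum for the nontrivial character $\chi$ modulo $4$ over the units of $\Z_{n/d}$; splitting it through the Chinese remainder theorem into its $2$-part and odd part shows that a divisor $d$ with $\vartheta_2(n/d)=a$ contributes to $\lambda_j$ only when $\vartheta_2(j)=a-2$, and then contributes an integer multiple of $2^{\,a-1}$, namely $\pm\sigma(d)\chi(j)\chi(M_d)\,c(j,M_d)$ up to the power of two, where $M_d$ is the odd part of $n/d$ and $c(j,M_d)$ a Ramanujan sum. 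Consequently the indices partition according to $\vartheta_2(j)$: for $j$ odd only $\D_2$ matters, for $j\equiv2\pmod4$ only $\D_3$ matters, and for $4\mid j$ only the $\D_i$ with $i\ge4$ matter, each of those contributing a multiple of $8$. In particular every $\lambda_j$ is even.

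I would then convert the exponential identity into arithmetic. Writing $q=e^{2\ii t}$, the identity becomes $q^{\lambda_j/2}=\ii^{-j}$, and since $\ii^{-j}$ is a primitive fourth root for odd $j$, one checks the system is solvable precisely when $q$ can be taken to be a fixed primitive fourth root of unity $\varepsilon\ii$ together with the congruences $\lambda_j\equiv\pm2j\pmod8$ for $j$ odd, $\lambda_j\equiv4\pmod8$ for $j\equiv2\pmod4$, and $\lambda_j\equiv0\pmod8$ for $4\mid j$; the last family is automatic from the divisibility of the previous step, which is exactly why the divisors with $\vartheta_2(n/d)\ge4$ are left unconstrained. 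Using the Gauss-sum values, the single divisor $d=n/4$ gives $\lambda_j=\mp2$ on odd $j$, and the single divisor $d=n/8$ (which exists only when $8\mid n$) gives $\lambda_j=\pm4$ on $j\equiv2\pmod4$, so $\D_2=\{n/4\}$ and $\D_3=\{n/8\}$ make every congruence hold and recover $8\mid n$; this yields \ref{thm1.3.2}$\Rightarrow$\ref{thm1.3.1}, with an explicit transfer time extracted from $q=\varepsilon\ii$.

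The hard part is the converse, that the congruences genuinely pin $\D_2$ and $\D_3$ down to the single divisors $n/4$ and $n/8$ (and in particular force the common $2$-adic valuation of the odd-index eigenvalues to be exactly $1$, rather than some higher value). This is a parity/valuation analysis of the sums $\sum_{d\in\D_2}\sigma(d)\chi(M_d)\,c(j,M_d)$ and its $\D_3$ analogue: the point is that Ramanujan sums change parity as $j$ moves between residue classes (for instance $c(j,3)=-1$ or $2$ according as $3\nmid j$ or $3\mid j$), so any extra divisor produces an odd index on which the required congruence fails. I expect this step to need test indices $j$ chosen in prescribed $\gcd$-classes together with an inductive or M\"obius-type argument over the divisors of $n/4$, and it is the main obstacle; once it is in place, combining it with the reduction of the first paragraph gives the stated equivalence.
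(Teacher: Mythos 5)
Your opening reduction is correct and in fact cleaner than the paper's route: since $H_\Gamma$ is circulant, $U(t)$ commutes with the cyclic shift, so $\PST$ from $b$ to $b+n/4$ at time $t$ propagates to $\PST$ between every pair among $b$, $b+n/4$, $b+n/2$, $b+3n/4$ at times $t$, $2t$, $3t$; hence statement (i) collapses to a single $\PST$ at difference $n/4$. The paper instead goes through \Cref{thm::4.2} and handles the differences $n/2$ and $n/4$ by two separate valuation conditions (\Cref{lem::3.2} and \Cref{thm::4.4}). Your direction (ii) $\Rightarrow$ (i) is essentially sound, and your divisor-by-divisor eigenvalue description agrees with \Cref{thm::1.5}.

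The genuine gap is the forward direction (i) $\Rightarrow$ (ii), and it is larger than the single step you flag as ``the main obstacle'', because it already infects your third paragraph. You assert that the system $q^{\lambda_j/2}=\ii^{-j}$ is solvable ``precisely when'' $q$ can be taken to be a primitive fourth root of unity together with your mod-$8$ congruences. As a claim about a general family of even integer eigenvalues this is false: solvability is equivalent to all consecutive differences $\lambda_{j+1}-\lambda_j$ sharing one common $2$-adic valuation $a$ with all odd parts $(\lambda_{j+1}-\lambda_j)/2^{a}$ congruent modulo $4$, and nothing in your argument excludes $a\geq 2$. For instance, a spectrum satisfying $\lambda_j\equiv -4j \pmod{16}$ admits $\PST$ at difference $n/4$ at time $t'=1/16$ (so $q$ is a primitive eighth root of unity) while violating your congruence $\lambda_j\equiv \pm 2j \pmod 8$ at every odd $j$. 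Excluding such higher valuations for integral oriented circulant graphs is not formal bookkeeping: one must show that $\sum_{d\in\D_2}\pm\, c_{n/(4d)}(j)$ cannot be even for every odd $j$, which is precisely the parity statement for combinations of Ramanujan sums that the paper proves and uses (\Cref{lem::3.3,lem::3.4}), and whose $\D_3$ analogue is \Cref{lem::4.4}. That same lemma is also what your final paragraph needs in order to conclude that any extra divisor in $\D_2$ or $\D_3$ breaks the congruences --- the step you explicitly leave open. So both halves of the forward implication, namely that $\MST$ forces the mod-$8$ congruences and that the congruences force $\D_2=\{n/4\}$, $\D_3=\{n/8\}$, $n\in 8\N$, currently rest on an unproved parity analysis; as written, the proposal establishes only (ii) $\Rightarrow$ (i).
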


The paper is organized as follows. In~\Cref{sec::2}, we provide an exact formula of the Ramanujan’s sine sum, and give an expression for  the eigenvalues of integral oriented circulant graphs. In~\Cref{sec::3}, we give the proof of \Cref{thm::1.6}. In~\Cref{sec::4}, we give the proof of \Cref{thm::1.7}.

\section{The eigenvalues of integral oriented circulant graphs}\label{sec::2}

In the section, we will give a characterization of the eigenvalues of integral oriented circulant graphs. For further research, we need to introduce the concepts of Ramanujan's sum and Ramanujan’s sine sum.

Let $n$ be a positive integer, and let $d$ be a divisor of $n$. Recall that $G_{n}(d)=\{k:1\leq k\leq n-1, \gcd(k, n) =d\}$ and $G_n^r(d)=\{ dk: k\equiv r \pmod 4, \gcd(dk,n )= d\}=d G_{n/d}^r(1)$.

\begin{defn}{\bf (See~\cite[p.~308]{HW08})}
  Let $q$ and $n$ be positive integers. Define
  \begin{equation}\label{eq::a}
    c_{n}(q)=\sum_{\substack{1 \leq a \leq n \\ \gcd(a, n)=1}} e^{2 \pi \ii \frac{a}{n}q}=\sum_{a\in G_n(1)} \cos \frac{2 \pi a q}{n}.
  \end{equation}
  The expression for $c_{n}(q)$ is known as \emph{Ramanujan's sum}.
\end{defn}

\begin{prop}\label{prop::2.2}
  Let $c_n(q)$ be the Ramanujan's sum. Then
  \begin{enumerate}[label = \bf(\roman*)]
    \item $c_1(q)=1$, for all positive integers $q$;
    \item if $n$ is a prime number,
          \[
            c_n(q) =
            \begin{cases}
              -1,  & \text{if $n\nmid q$}, \\
              n-1, & \text{if $n\mid q$}.
            \end{cases}
          \]
  \end{enumerate}
\end{prop}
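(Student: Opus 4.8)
The plan is to evaluate the exponential sum in \Cref{eq::a} directly, treating the two parts separately. Part (i) is immediate: when $n=1$ the only index satisfying $1\le a\le 1$ and $\gcd(a,1)=1$ is $a=1$, so $c_1(q)=e^{2\pi\ii q}$, which equals $1$ because $q$ is a positive integer.

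For part (ii) I would first note that when $n$ is prime the summation set $\{a : 1\le a\le n,\ \gcd(a,n)=1\}$ is exactly $\{1,2,\ldots,n-1\}$, since every integer in this range is coprime to $n$ while $\gcd(n,n)=n\ne 1$. I would then split on whether $n\mid q$. If $n\mid q$, then $aq/n\in\Z$ for every $a$, so each summand $e^{2\pi\ii aq/n}$ equals $1$ and the sum collapses to $\sum_{a=1}^{n-1}1=n-1$.

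The only step needing care is the case $n\nmid q$. Here I would set $\omega=e^{2\pi\ii q/n}$ and observe that $\omega\ne 1$ (as $n\nmid q$) while $\omega^{n}=e^{2\pi\ii q}=1$. The trick is to complete the sum over the full residue system $\{0,1,\ldots,n-1\}$: the geometric series gives
\[
  \sum_{a=0}^{n-1}\omega^{a}=\frac{\omega^{n}-1}{\omega-1}=0,
\]
and since the $a=0$ term contributes $1$, subtracting it yields $c_n(q)=\sum_{a=1}^{n-1}\omega^{a}=-1$. The whole argument is elementary; the one point to watch is recognizing that the coprimality constraint for a prime makes the index set differ from the full residue system by exactly the single term $a=0$, which is precisely what lets the geometric series telescope to zero.
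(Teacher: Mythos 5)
Your proof is correct and complete. Note that the paper itself offers no proof of this proposition at all — it is quoted as a standard property of Ramanujan's sums (cf.\ Hardy--Wright), so there is nothing to compare against; your argument (part (i) by direct evaluation, part (ii) by splitting on $n\mid q$ and, in the nontrivial case, completing the coprime index set $\{1,\ldots,n-1\}$ to the full residue system so the geometric series $\sum_{a=0}^{n-1}\omega^a$ vanishes) is exactly the standard elementary derivation and handles the one delicate point — that primality makes the coprimality condition coincide with $\{1,\ldots,n-1\}$ — correctly.
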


A set $\mathcal{C} \subseteq G_n(1)$ is called \textit{skew-symmetric} if $n-a \in \mathcal{C}^{-1} $ for all $a\in \mathcal{C}$, where $\mathcal{C}^{-1}= G_n(1)\setminus \mathcal{C}$. Now, the Ramanujan's sum \Cref{eq::a} can also be written as
\begin{equation}\label{ramasum2}
  \begin{split}
    c_n(q)= \sum_{a\in G_n(1)} w_n^{aq}= \sum_{a\in \mathcal{C}} 2 \cos \frac{2\pi a q}{n},
  \end{split}
\end{equation}
where $\omega_n=\exp(\ii 2\pi/n)$ is the $n$-th root of unity.

Note that the Ramanujan's sum $c_n(q)$ is an integer, for any $q,n \in \N$. We are replacing cosine with sine in \Cref{ramasum2}, obtain

\begin{displaymath}
  s_{n}^\mathcal{C}= \sum_{a\in \mathcal{C}} \frac{\omega^{aq}_n-\omega^{-aq}_n}{\ii} =\sum_{a\in \mathcal{C}} 2 \sin \frac{2 \pi a q}{n},
\end{displaymath}
where $\omega_n=\exp(\ii 2\pi/n)$ is the $n$-th root of unity.

In~\cite{MB21a}, Kadyan and Bhattacharjya proved that, $s_{n}^\mathcal{C}$ is still an integer, for any $q$, $n\equiv 0 \pmod 4 \in \N$ if and only if $\mathcal{C}=G_n^1(1)$ or $G_n^3(1)$. To match sign $\sigma$, we adjust the sign in \cite{MB21a} to redefine the following.
\begin{defn}{\bf (See~\cite{MB21a})}
  Let $q$ and $n\equiv 0 \pmod 4$ be positive integers. Define

  \begin{displaymath}
    s_{n}^\sigma(q)=\ii \sum_{a\in S_n(1)} (\omega^{aq}_n-\omega^{-aq}_n) =-\sum_{a\in S_n(1)} 2 \sin \frac{2 \pi a q}{n},
  \end{displaymath}
  where $\omega_n=\exp(\ii 2\pi/n)$, if $S_n(1) = G_n^{1}(1)$ then $\sigma=1$ or if $S_n(1)=G_n^{3}(1)$ then $\sigma=-1$. The expression for $s_{n}^\sigma(q)$ is called as \emph{Ramanujan's sine sum}.
\end{defn}

Note that $s_{n}^{1}(q)=-s_{n}^{-1}(q)$, for any $q$, $n\equiv 0 \pmod 4 \in \N$.
Without loss of generality, we only consider  $s_{n}^{1}(q)$, for short denoted by $s_{n}(q)$. Now, we have $s_{n}^\sigma(q)=\sigma s_{n}(q)$.

In the following, we will give a characterization of Ramanujan's sine sum $s_{n}(q)$. First, we will prove some lemma.

\begin{lem}\label{lem::2.1}
  Let $n=4m$ and with $m$ being an odd positive integer. Then
  \[
    m+4r\in
    \begin{cases}
      G_{4m}^{1}(1), & \text{if $m\equiv 1\pmod4$}, \\
      G_{4m}^{3}(1), & \text{if $m\equiv 3\pmod4$},
    \end{cases}
  \]
  for some $r\in G_{m}(1)$.
\end{lem}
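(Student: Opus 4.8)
The plan is to observe that membership of $m+4r$ in $G_{4m}^{1}(1)$ or $G_{4m}^{3}(1)$ splits into two independent requirements, namely the residue of $m+4r$ modulo $4$ and its coprimality with $4m$, and that both are forced by the hypotheses that $m$ is odd and that $r\in G_m(1)$ (so that $\gcd(r,m)=1$). I would fix an arbitrary $r\in G_m(1)$ and verify the two conditions in turn, since this simultaneously establishes the existence asserted by ``for some $r$''.

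First I would settle the residue modulo $4$. Since $4r\equiv 0\pmod 4$, we have $m+4r\equiv m\pmod 4$, so the residue class of $m+4r$ is inherited verbatim from $m$. Because $m$ is odd, either $m\equiv 1$ or $m\equiv 3\pmod 4$, and correspondingly $m+4r\equiv 1$ or $m+4r\equiv 3\pmod 4$; this is exactly the case split in the statement and matches the superscript $1$ or $3$ of the target set. Note also that reducing $m+4r$ modulo $4m$ leaves its residue modulo $4$ unchanged, since $4\mid 4m$.

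Next I would check the coprimality condition $\gcd(m+4r,4m)=1$. As $m$ is odd we have $\gcd(4,m)=1$, so $4m$ factors into the coprime parts $4$ and $m$, and it suffices to show $m+4r$ is coprime to each. For the factor $4$: since $m$ is odd and $4r$ is even, $m+4r$ is odd, hence $\gcd(m+4r,4)=1$. For the factor $m$: we have $\gcd(m+4r,m)=\gcd(4r,m)$, and as $\gcd(4,m)=1$ this equals $\gcd(r,m)$, which is $1$ because $r\in G_m(1)$. Combining the two gives $\gcd(m+4r,4m)=1$, and this value is likewise unaffected by reducing $m+4r$ modulo $4m$.

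These two facts together place $m+4r$ (reduced to its representative in $\{1,\dots,4m-1\}$ if necessary) in $G_{4m}^{1}(1)$ when $m\equiv 1\pmod 4$ and in $G_{4m}^{3}(1)$ when $m\equiv 3\pmod 4$, as claimed. I expect no genuine obstacle, since the argument is an elementary gcd computation; the only points requiring a little care are that the coprime factorization of $4m$ truly relies on $m$ being odd (through $\gcd(4,m)=1$), and that the conclusion in fact holds for \emph{every} $r\in G_m(1)$, so that the existential ``for some $r$'' is immediate as soon as $G_m(1)\neq\emptyset$.
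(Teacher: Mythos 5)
Your proposal is correct and follows essentially the same route as the paper: fix an arbitrary $r\in G_m(1)$, observe $m+4r\equiv m\pmod 4$, and deduce $\gcd(m+4r,4m)=1$ from $\gcd(r,m)=1$. The paper simply asserts the gcd implication in one line, whereas you (helpfully) spell out the coprime-factor argument and the harmless reduction modulo $4m$.
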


\begin{proof}
  Assume that $m\equiv 1\pmod4$. Let $r\in G_{m}(1)$. Then $\gcd(r,m)=1$, which implies that $\gcd(m+4r,4m)=1$, and $m+4r\equiv 1\pmod4$. Thus, $m+4r\in G_{4m}^{1}(1)$. Similarly, if $m\equiv 3\pmod4$, then $m+4r\in G_{4m}^{3}(1)$.
\end{proof}

\begin{lem}\label{clm::1}
  Let $n=4m$ and with $m$ being an odd positive integer. Then
  \begin{equation}\label{eq:1}
    s_{n}(q)=
    \begin{cases}
      (-1)^{(m-1)/2}(-1)^{(q+1)/2}2c_{m}(q), & \text{if $q$ is odd,} \\
      0,                                     & \text{otherwise}.
    \end{cases}
  \end{equation}
\end{lem}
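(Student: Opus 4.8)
The plan is to reduce the sine sum $s_n(q)$ to Ramanujan's cosine sum $c_m(q)$ by replacing the index set $G_n^1(1)$ with a convenient parametrization and then expanding a sine of a sum. First I would rewrite $s_n(q) = -2\sum_{a\in G_n^1(1)}\sin\frac{2\pi aq}{n}$ using \Cref{lem::2.1}. That lemma identifies, for $n=4m$ with $m$ odd, the residues $m+4r$ with $r\in G_m(1)$; when $m\equiv1\pmod4$ they land in $G_n^1(1)$ and when $m\equiv3\pmod4$ they land in $G_n^3(1)$. Setting $\varepsilon=(-1)^{(m-1)/2}$ (so $\varepsilon=1$ exactly when $m\equiv1\pmod4$), and using that $a\mapsto n-a\equiv -a$ is a bijection $G_n^3(1)\to G_n^1(1)$, I would argue that $\{\varepsilon(m+4r)\bmod n : r\in G_m(1)\}$ is exactly $G_n^1(1)$. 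The bijectivity comes from a simple injectivity check ($m+4r\equiv m+4r'\pmod{4m}$ forces $r\equiv r'\pmod m$) together with the count $|G_n^1(1)|=\tfrac12\phi(4m)=\phi(m)=|G_m(1)|$.

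Since $\varepsilon=\pm1$, I can pull it through the sine to get $s_n(q) = -2\varepsilon\sum_{r\in G_m(1)}\sin\frac{2\pi(m+4r)q}{n}$, where $\frac{2\pi(m+4r)q}{4m} = \frac{\pi q}{2}+\frac{2\pi rq}{m}$. Expanding $\sin\!\big(\tfrac{\pi q}{2}+\theta\big)=\sin\tfrac{\pi q}{2}\cos\theta+\cos\tfrac{\pi q}{2}\sin\theta$ then splits the computation by the parity of $q$. When $q$ is even, $\sin\tfrac{\pi q}{2}=0$ and only the sine term survives; I would kill it using the involution $r\mapsto m-r$ on $G_m(1)$, which has no fixed point (as $m$ is odd) and sends $\sin\frac{2\pi rq}{m}$ to its negative, so $\sum_{r\in G_m(1)}\sin\frac{2\pi rq}{m}=0$ and hence $s_n(q)=0$. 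When $q$ is odd, $\cos\tfrac{\pi q}{2}=0$ and $\sin\tfrac{\pi q}{2}=(-1)^{(q-1)/2}$, leaving $s_n(q) = -2\varepsilon(-1)^{(q-1)/2}\sum_{r\in G_m(1)}\cos\frac{2\pi rq}{m} = -2\varepsilon(-1)^{(q-1)/2}c_m(q)$ by the definition of Ramanujan's sum. Finally I would rewrite $-(-1)^{(q-1)/2}=(-1)^{(q+1)/2}$ and recall $\varepsilon=(-1)^{(m-1)/2}$ to land on $(-1)^{(m-1)/2}(-1)^{(q+1)/2}2c_m(q)$.

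The bookkeeping of signs — reconciling the two cases $m\equiv1,3\pmod4$ through the single factor $\varepsilon=(-1)^{(m-1)/2}$ — is the part most prone to error and is where I would be most careful; packaging both cases into $\varepsilon$ at the outset is precisely what keeps the argument uniform and avoids a duplicated case analysis. The only genuine subtlety is the degenerate case $m=1$ (that is, $n=4$), where $G_1(1)$ is empty under the stated convention; here I would simply verify $s_4(q)=-2\sin\frac{\pi q}{2}$ against the claimed formula directly, using $c_1(q)=1$ from \Cref{prop::2.2}.
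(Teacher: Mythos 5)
Your proof is correct and follows essentially the same route as the paper's: both rest on \Cref{lem::2.1}'s parametrization $m+4r$, $r\in G_{m}(1)$, split off the phase $\pi q/2$, and reduce to $c_{m}(q)$, the only difference being that you carry out the computation with real sines and the angle-addition formula where the paper uses complex exponentials and powers of $\ii$. You additionally pin down two points the paper glosses over — surjectivity of the parametrization (injectivity plus the count $\lvert G_{n}^{1}(1)\rvert=\lvert G_{m}(1)\rvert$) and the degenerate case $m=1$ — which is welcome but does not change the argument.
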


\begin{proof}
  By \Cref{lem::2.1}, it follows that
  \[
    \begin{aligned}
      s_{n}(q) & = \ii\sum_{a\in G_{n}^{1}(1)}(\omega_{n}^{aq}-\omega_{n}^{-aq})                                                                                                   \\
               & = \begin{cases}
                     \ii\sum_{a\in G_{4m}^{1}(1)}(\omega_{4m}^{aq}-\omega_{4m}^{-aq}),  & m\equiv 1\pmod4 \\
                     -\ii\sum_{a\in G_{4m}^{3}(1)}(\omega_{4m}^{aq}-\omega_{4m}^{-aq}), & m\equiv 3\pmod4
                   \end{cases}                                              \\
               & = \begin{cases}
                     \ii\sum_{m+4r\in G_{4m}^{1}(1)}(\omega_{4m}^{(m+4r)q}-\omega_{4m}^{-(m+4r)q}),  & m\equiv 1\pmod4 \\
                     -\ii\sum_{m+4r\in G_{4m}^{3}(1)}(\omega_{4m}^{(m+4r)q}-\omega_{4m}^{-(m+4r)q}), & m\equiv 3\pmod4
                   \end{cases}                                          \\
               & = \begin{cases}
                     \ii\sum_{r\in G_{m}(1)}(\omega_{4}^{q}\omega_{m}^{rq}-\omega_{4}^{-q}\omega_{m}^{-rq}),  & m\equiv 1\pmod4 \\
                     -\ii\sum_{r\in G_{m}(1)}(\omega_{4}^{q}\omega_{m}^{rq}-\omega_{4}^{-q}\omega_{m}^{-rq}), & m\equiv 3\pmod4
                   \end{cases} \\
               & = (-1)^{(m-1)/2}\ii\sum_{r\in G_{m}(1)}(\ii^{q}\omega_{m}^{rq}-\ii^{-q}\omega_{m}^{-rq})                                                                          \\
               & = (-1)^{(m-1)/2}(\ii^{q+1}c_{m}(q)-\ii^{1-q}c_{m}(-q))                                                                                                            \\
               & = (-1)^{(m-1)/2}(-1)^{(q+1)/2}(1-(-1)^{-q})c_{m}(q)                                                                                                               \\
               & = \begin{cases}
                     (-1)^{(m-1)/2}(-1)^{(q+1)/2}2 c_{m}(q), & \text{if $q$ is odd,} \\
                     0,                                      & \text{otherwise}.
                   \end{cases}
    \end{aligned}
  \]
  This proves \Cref{clm::1}.
\end{proof}

\begin{lem}\label{clm::2}
  Let $n=2^{t}m$ and with $m$ being an odd positive integer and $t\geq 2$. Then
  \begin{equation}\label{eq:2}
    s_{n}(q)=2^{t-2}s_{4m}(q/2^{t-2}),
  \end{equation}
  for some $q/2^{t-2}\in \N$.
\end{lem}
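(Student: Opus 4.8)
The plan is to decompose the index set $G_n^1(1)$ according to residues modulo $4m$ and then apply a root-of-unity filter to the defining sum of $s_n(q)$. Write $N=2^{t-2}$, so that $n=2^t m = N\cdot 4m$. The first step is to establish the $N$-to-one correspondence
\[
  G_n^1(1)=\{\, b+4mk : b\in G_{4m}^1(1),\ 0\le k\le N-1 \,\},
\]
the union being disjoint. The point is that any integer $a$ with $a\equiv 1\pmod 4$ is odd and hence automatically coprime to $2^t$, so that the condition $\gcd(a,n)=1$ collapses to $\gcd(a,m)=1$; since both $a\equiv 1\pmod4$ and $\gcd(a,m)=1$ depend only on $a$ modulo $4m$, each residue $b\in G_{4m}^1(1)$ lifts to exactly $N=n/(4m)$ elements of $G_n^1(1)$, and every element of $G_n^1(1)$ arises in this way. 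I expect this bookkeeping to be routine once coprimality is reduced to the odd part $m$.

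Next I would substitute this decomposition into the definition of $s_n(q)$ and separate the two scales. For $a=b+4mk$ one has $\omega_n^{aq}=\omega_n^{bq}\,\omega_n^{4mkq}$, and the key simplification is $\omega_n^{4mkq}=\exp(2\pi\ii\,kq/N)$, which depends on $k$ and $q$ only through $N=2^{t-2}$. Factoring the double sum over $b$ and $k$ then gives
\[
  s_n(q)=\ii\sum_{b\in G_{4m}^1(1)}\omega_n^{bq}\sum_{k=0}^{N-1}e^{2\pi\ii kq/N}
  \;-\;\ii\sum_{b\in G_{4m}^1(1)}\omega_n^{-bq}\sum_{k=0}^{N-1}e^{-2\pi\ii kq/N}.
\]
The inner geometric sums each equal $N$ when $N\mid q$ and vanish otherwise; this is precisely the assertion that $s_n(q)=0$ unless $2^{t-2}\mid q$, which accounts for the hypothesis $q/2^{t-2}\in\N$ in the statement.

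Finally, assuming $N\mid q$ and setting $q'=q/N$, I would note that $\omega_n^{bq}=\exp\!\big(2\pi\ii\,bq/(N\cdot 4m)\big)=\exp\!\big(2\pi\ii\,bq'/(4m)\big)=\omega_{4m}^{bq'}$, so that replacing both inner sums by $N=2^{t-2}$ yields
\[
  s_n(q)=2^{t-2}\,\ii\sum_{b\in G_{4m}^1(1)}\big(\omega_{4m}^{bq'}-\omega_{4m}^{-bq'}\big)
  =2^{t-2}\,s_{4m}(q'),
\]
which is the claimed identity. The only genuinely delicate point is the first step, namely verifying that reduction modulo $4m$ gives a clean $N$-to-one correspondence between $G_n^1(1)$ and $G_{4m}^1(1)$; once that is in place, the remaining manipulations are a direct factorization together with the standard evaluation of a geometric sum of roots of unity.
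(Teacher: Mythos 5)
Your argument is correct, but it takes a genuinely different route from the paper's. The paper proves the identity by induction on $t$, peeling off one factor of $2$ per step via the two-to-one decomposition $G_{2^{k+1}m}^{1}(1)=G_{2^{k}m}^{1}(1)\cup\bigl(2^{k}m+G_{2^{k}m}^{1}(1)\bigr)$; the two lifts of each element contribute a factor $1+(-1)^{q}$, so the sum vanishes unless $2\mid q$, and one gets $s_{2^{k+1}m}(q)=2s_{2^{k}m}(q/2)$, which telescopes down to $s_{4m}$. You instead collapse all $t-2$ doublings into a single step: an $N$-to-one correspondence between $G_{n}^{1}(1)$ and $G_{4m}^{1}(1)$ with $N=2^{t-2}$, followed by the root-of-unity filter $\sum_{k=0}^{N-1}e^{2\pi\ii kq/N}\in\{0,N\}$. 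Your key reduction is sound: for $a\equiv 1\pmod 4$ the integer $a$ is odd, so $\gcd(a,n)=1$ collapses to $\gcd(a,m)=1$, and both defining conditions of $G_{n}^{1}(1)$ depend only on $a$ modulo $4m$, which gives the clean disjoint-union decomposition and the exact count of $N$ lifts per residue. What your version buys: it is non-inductive, shorter to verify in one pass, and it makes fully explicit the dichotomy that $s_{n}(q)=0$ whenever $2^{t-2}\nmid q$, which clarifies the otherwise awkward hypothesis ``for some $q/2^{t-2}\in\N$'' in the statement (the paper records this vanishing only implicitly, one doubling at a time, through the factor $1+(-1)^{q}$). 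What the paper's version buys: each inductive step needs nothing beyond the parity observation $1+(-1)^{q}\in\{0,2\}$ rather than the general evaluation of a geometric sum, and the step-by-step halving keeps every intermediate quantity of the same shape $s_{2^{k}m}(\cdot)$, which some readers may find easier to audit. Both proofs establish the same statement; yours is the more direct of the two.
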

\begin{proof}
  We prove the lemma by using induction on $t$. It is clear that the identity holds for the case $t = 2$.
  Let $t\geq 3$ and assume that the identity holds for each $t=k$. Now let $n'=2^{k+1}m$.
  Since
  \[
    G_{2^{t+1}m}^{1}(1)=G_{2^{t}m}^{1}(1)\cup (2^{t}m+G_{2^{t}m}^{1}(1)),
  \]
  we have
  \[
    \begin{aligned}
      s_{n'}(q) & =\ii \sum_{a\in G_{2^{k+1}m}^{1}(1)}(\omega_{2^{k+1}m}^{aq}-\omega_{2^{k+1}m}^{-aq})                                                                                                             & \\
                & =\ii \sum_{a\in G_{2^{k}m}^{1}(1)\cup (2^{k}m+G_{2^{k}m}^{1}(1))}(\omega_{2^{k+1}m}^{aq}-\omega_{2^{k+1}m}^{-aq})                                                                                & \\
                & =\ii \sum_{a\in G_{2^{k}m}^{1}(1)}(\omega_{2^{k+1}m}^{aq}-\omega_{2^{k+1}m}^{-aq})+\ii \sum_{a\in 2^{k}m+G_{2^{k}m}^{1}(1)}(\omega_{2^{k+1}m}^{aq}-\omega_{2^{k+1}m}^{-aq})                      & \\
                & =\ii \sum_{a\in G_{2^{k}m}^{1}(1)}(\omega_{2^{k+1}m}^{aq}-\omega_{2^{k+1}m}^{-aq})+\ii\sum_{a\in G_{2^{k}m}^{1}(1)}(\omega_{2^{k+1}m}^{(a+2^{k}m)q}-\omega_{2^{k+1}m}^{-(a+2^{k}m)q})            & \\
                & =\ii \sum_{a\in G_{2^{k}m}^{1}(1)}(\omega_{2^{k+1}m}^{aq}-\omega_{2^{k+1}m}^{-aq})+\ii\sum_{a\in G_{2^{k}m}^{1}(1)}(\omega_{2}^{q}\omega_{2^{k+1}m}^{aq}-\omega_{2}^{-q}\omega_{2^{k+1}m}^{-aq}) & \\
                & =\ii \sum_{a\in G_{2^{k}m}^{1}(1)}((1+(-1)^{q})\omega_{2^{k+1}m}^{aq}-(1+(-1)^{-q})\omega_{2^{k+1}m}^{-aq})                                                                                      & \\
                & \xlongequal{2\mid q} 2\ii\sum_{a\in G_{2^{k}m}^{1}(1)}(\omega_{2^{k}m}^{aq/2}-\omega_{2^{k}m}^{-aq/2})                                                                                           & \\
                & =2s_{2^{k}m}(q/2)                                                                                                                                                                                & \\
                & =2^{k-1}s_{4m}(q/2^{k-1}),
    \end{aligned}
  \]
  for some $q/2^{k-1}\in \N$. By the induction hypothesis the identity holds. This proves \Cref{clm::2}.
\end{proof}

Let $q'=q/2^{t-2}\in \N$. By substituting \Cref{eq:1} into \Cref{eq:2}, we can obtain \Cref{thm::1.4}.

\begin{thm}\label{thm::1.4}
  Let $n=2^{t}m$ and with $m$ being an odd positive integer and $t\geqslant 2$. Then
  \[
    s_{n}(q)=
    \begin{cases}
      (-1)^{(m-1)/2}(-1)^{(q'+1)/2}2^{t-1}c_{m}(q'), & \text{if $q'$ is odd,} \\
      0,                                             & \text{otherwise},
    \end{cases}
  \]
  where $q'=q/2^{t-2}\in \N$.
\end{thm}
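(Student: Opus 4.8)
The plan is to obtain \Cref{thm::1.4} as an immediate composition of the two preceding lemmas, since \Cref{clm::2} reduces the general modulus $n=2^{t}m$ to the base modulus $4m$, and \Cref{clm::1} evaluates $s_{4m}$ explicitly. First I would invoke \Cref{clm::2}, i.e. \Cref{eq:2}, to write $s_{n}(q)=2^{t-2}s_{4m}(q')$ with $q'=q/2^{t-2}$. This presupposes $2^{t-2}\mid q$ so that $q'\in\N$, which is exactly the standing hypothesis of the theorem; and I would note that when $2^{t-2}\nmid q$ the repeated halving in the proof of \Cref{clm::2} (each step carrying the factor $1+(-1)^{q}$) forces $s_{n}(q)=0$, consistent with the vanishing branch.

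Next I would substitute \Cref{clm::1}, i.e. \Cref{eq:1}, applied with modulus $4m$ and argument $q'$. Since the odd parameter $m$ is unchanged, this yields $s_{4m}(q')=(-1)^{(m-1)/2}(-1)^{(q'+1)/2}2c_{m}(q')$ when $q'$ is odd and $s_{4m}(q')=0$ otherwise. Combining the two steps, the two powers of two merge as $2^{t-2}\cdot 2=2^{t-1}$, the sign factors $(-1)^{(m-1)/2}$ and $(-1)^{(q'+1)/2}$ carry over verbatim (they depend only on $m$ and on $q'$, not on $t$), and the case split on the parity of $q'$ is preserved. This produces precisely the stated formula.

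The computation is mechanical, so there is no genuine obstacle; the only points requiring care are the bookkeeping. I would make sure the parity dichotomy is phrased in terms of $q'$ rather than $q$ — the nonvanishing case is $\vartheta_2(q)=t-2$, equivalently $q'$ odd — and that the power of two is assembled correctly as $2^{t-1}$ rather than $2^{t}$ or $2^{t-2}$. I would also confirm that the hypothesis $q'\in\N$ is not merely assumed but is forced by the structure of \Cref{clm::2}, so that the ``otherwise'' branch of the theorem correctly absorbs the $q'$-even sub-case coming from \Cref{clm::1} (and, if one states the result for all $q$, the $2^{t-2}\nmid q$ sub-case as well).
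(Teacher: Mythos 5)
Your proposal is correct and is essentially identical to the paper's own argument: the paper obtains \Cref{thm::1.4} precisely by substituting \Cref{eq:1} (the evaluation of $s_{4m}$) into \Cref{eq:2} (the reduction $s_{n}(q)=2^{t-2}s_{4m}(q/2^{t-2})$), merging the powers of two into $2^{t-1}$. Your additional bookkeeping about the $2^{t-2}\nmid q$ case and the parity of $q'$ is sound but goes slightly beyond what the paper spells out.
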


In the remainder of this section, we will characterize the eigenvalues of integral oriented circulant graphs using Ramanujan's sine sum.

Let $\Gamma=G(\mathbb{Z}_n,\mathcal{C})$ be an integral oriented circulant graph, and let $H$ be the  Hermitian adjacency matrix of $\Gamma$. According to~\cite{MB21a}, the eigenvalues and eigenvectors of $H$ are, respectively, given by
\begin{equation}\label{eq::1}
  \mu_j=\ii \sum_{k \in \mathcal{C}} (\omega^{jk}_n-\omega^{-jk}_n),
  \quad \mathbf{v}_j=[1 \ \omega_n^k \ \omega_n^{2k} \cdots
      \omega_n^{(n-1)k}],
\end{equation}
for $0\leq j\leq n-1$, where $\omega_n=\exp(\ii 2\pi/n)$ is the $n$-th root of unity.

\vspace{5mm}

\begin{thm}\label{thm::1.5}
  Let $\Gamma=\IOCG_n(\D,\sigma)$ be an integral oriented circulant graph. Then the eigenvalues of $\Gamma$ are
  \begin{equation*}
    \mu_j=
    \begin{cases}
      \sum_{d\in \D_i}\sigma(d) (-1)^{(\frac{n}{2^{i}d}-1)/2} (-1)^{(\frac{j}{2^{i-2}}+1)/2} 2^{i-1} c_{\frac{n}{2^{i}d}} (j/2^{i-2}), & \text{if $j/2^{i-2}$ is odd,} \\
      0,                                                                                                                               & \text{otherwise,}
    \end{cases}
  \end{equation*}
  for $2\leq i \leq \vartheta_2(n)$, $0\leq j \leq n-1$, where $\D=\bigcup_{i=2}^{\vartheta_2(n)}\D_i \subseteq \left\{ d: d\mid n/4\right\}$.
\end{thm}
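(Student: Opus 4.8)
The plan is to start from the explicit eigenvalue formula \Cref{eq::1} and reduce each $\mu_j$ to a signed sum of Ramanujan's sine sums, at which point \Cref{thm::1.4} does the rest. First I would invoke the decomposition $\mathcal{C}=\bigcup_{d\in\D}S_n(d)$ from \Cref{thm::IOCG} and split the sum in \Cref{eq::1} as $\mu_j=\sum_{d\in\D}\ii\sum_{k\in S_n(d)}(\omega_n^{jk}-\omega_n^{-jk})$. This is legitimate because the sets $S_n(d)$ are pairwise disjoint: every element of $S_n(d)$ has $\gcd$ exactly $d$ with $n$, so distinct divisors give disjoint blocks, and for a fixed $d$ the set $S_n(d)$ is either $G_n^1(d)$ or $G_n^3(d)$, never both.

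The key step is to recognise each inner sum as a single Ramanujan's sine sum for the quotient $n/d$. Using the relation $G_n^r(d)=dG_{n/d}^r(1)$, which says precisely $S_n(d)=d\,S_{n/d}(1)$, I would substitute $k=da$ with $a\in S_{n/d}(1)$ and use $\omega_n^{da}=\omega_{n/d}^{a}$ to obtain $\ii\sum_{k\in S_n(d)}(\omega_n^{jk}-\omega_n^{-jk})=s_{n/d}^{\sigma(d)}(j)=\sigma(d)\,s_{n/d}(j)$, the last equality being the identity $s_n^\sigma(q)=\sigma s_n(q)$. The point demanding care is the sign bookkeeping: when $\sigma(d)=1$ we have $S_n(d)=G_n^1(d)$ and the sum reproduces $s_{n/d}^{1}(j)=s_{n/d}(j)$, whereas when $\sigma(d)=-1$ we have $S_n(d)=G_n^3(d)$ and the sum reproduces $s_{n/d}^{-1}(j)=-s_{n/d}(j)$; both cases collapse into the single expression $\sigma(d)\,s_{n/d}(j)$. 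This yields the clean intermediate identity $\mu_j=\sum_{d\in\D}\sigma(d)\,s_{n/d}(j)$.

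Finally I would regroup this sum along the partition $\D=\bigcup_{i=2}^{\vartheta_2(n)}\D_i$ and apply \Cref{thm::1.4}. For $d\in\D_i$ we have $\vartheta_2(n/d)=i$, so $n/d=2^i m_d$ with $m_d=n/(2^id)$ odd; this is exactly the hypothesis needed to invoke \Cref{thm::1.4} with $t=i$ and $q'=j/2^{i-2}$, giving $s_{n/d}(j)=(-1)^{(m_d-1)/2}(-1)^{(q'+1)/2}2^{i-1}c_{m_d}(q')$ when $q'$ is odd and $0$ otherwise. Substituting $m_d=n/(2^id)$ and summing over $d\in\D_i$ reproduces the claimed formula. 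I would close by noting that for a fixed $j$ the number $j/2^{i-2}$ is odd for at most one index $i$, namely $i=\vartheta_2(j)+2$, so at most one block $\D_i$ contributes and the case-split in the statement is well posed. The main obstacle is not conceptual but the careful check that the substitution $k=da$ and the convention $s_n^\sigma=\sigma s_n$ combine correctly for both values of $\sigma(d)$, together with confirming that $m_d=n/(2^id)$ is genuinely odd so that \Cref{thm::1.4} applies; the remainder is routine bookkeeping.
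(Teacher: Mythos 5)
Your proposal is correct and follows essentially the same route as the paper's own proof: split $\mu_j$ over the decomposition $\mathcal{C}=\bigcup_{d\in\D}S_n(d)$, rescale each block via $S_n(d)=d\,S_{n/d}(1)$ to get $\mu_j=\sum_{d\in\D}\sigma(d)\,s_{n/d}(j)$, and then apply \Cref{thm::1.4} blockwise along $\D=\bigcup_i\D_i$. Your added observations (pairwise disjointness of the $S_n(d)$ and that only the single index $i=\vartheta_2(j)+2$ can contribute for a fixed $j$) merely make explicit what the paper states tersely, so the two arguments coincide in substance.
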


\begin{proof}
  Let $\Gamma=\IOCG_n(\D,\sigma)$ be an integral oriented circulant graph, and let $H$ be the Hermitian adjacency matrix of $\Gamma$. Then the eigenvalues $\mu_j$ of $H$ can be expressed in terms of Ramanujan's sine sum as follows

  \begin{equation} \label{eq::2}
    \begin{aligned}
      \mu_j & =\ii \sum_{k \in \bigcup_{d\in \D}S_n(d)} (\omega^{jk}_n-\omega^{-jk}_n)     \\
            & =\sum_{d\in \D}\ii \sum_{k\in S_n(d)} (\omega^{jk}_n-\omega^{-jk}_n)         \\
            & =\sum_{d\in \D}\ii \sum_{k\in dS_n(1)} (\omega^{jk}_n-\omega^{-jk}_n)        \\
            & =\sum_{d\in \D}\ii \sum_{k\in S_n(1)} (\omega^{jk}_{n/d}-\omega^{-jk}_{n/d}) \\
            & =\sum_{d\in \D} s^{\sigma}_{n/d}(j)                                          \\
            & =\sum_{d\in \D}\sigma(d) s_{n/d}(j),
    \end{aligned}
  \end{equation}
  for $0\leq j \leq n-1$, where $\D \subseteq \left\{ d: d\mid n/4\right\}$. For the Ramanujan’s sine sum $s_{n/d}(j)$, let $n/d=2^{i}m$ and $j=2^{i-2}j'$, with $m$ and $j'$ be an odd positive integer. If we fix the integer $i$, then $n/d$ and $j$ are determined by $i$. Let   $\D=\bigcup_{i=2}^{\vartheta_2(n)}\D_i \subseteq \left\{ d: d\mid n/4\right\}$ for $2\leq i \leq \vartheta_2(n)$.
  For \Cref{eq::2}, if j is fixed, then $i$ is determined by $j$, and it follows that $d\in \D_i$. Thus, we have
  \begin{equation} \label{eq::3}
    \mu_j=\sum_{d\in \D_i} \sigma(d) s_{n/d}(j).
  \end{equation}
  By \Cref{thm::1.4,eq::3}, the result follows.
\end{proof}

\section{Proof of \texorpdfstring{\Cref{thm::1.6}}{Theorem 1.2}}\label{sec::3}
In the section, we will extend some results in \cite{BPS09,Ba13} to integral oriented circulant graphs. We first introduce some related concepts and theorems.

Let $H$ be a Hermitian matrix, let $\mu_0,\ldots,\mu_{n-1}$ be all eigenvalues (not
necessarily distinct) of $H$, and let $\mathbf{u}_0,\ldots,\mathbf{u}_{n-1}$ be the corresponding normalized eigenvectors of $\mu_0,\ldots,\mu_{n-1}$ with form an orthonormal basis of $\mathbb{C}_n$. By \emph{spectral decomposition} (see~\cite[Theorem 5.5.1]{Go93}), we have $H=\sum_{r=0}^{n-1}\mu_{r} \mathbf{u}_{r}\mathbf{u}_{r}^*$, where $\mathbf{u}_{r}^*$ denotes conjugate transpose of $\mathbf{u}_{r}$. Furthermore, the transition matrix $U(t)$ of $H$ can be written as
\begin{equation*}
  U(t) =
  \sum_{r=0}^{n-1} \exp(\ii t \mu_{r}) \mathbf{u}_{r}\mathbf{u}_{r}^*.
\end{equation*}

Now, let $\Gamma=\IOCG_n(\D,\sigma)$ be an integral oriented circulant graph, and let $H$ be the Hermitian adjacency matrix of $\Gamma$. By \Cref{eq::1}, we see that  $\mathbf{u}_k=\mathbf{v}_k / \sqrt{n}$. Then the transition matrix $U(t)$ of $H$ becomes to
\begin{equation}\label{eq::4}
  U(t)=\frac1n \sum_{r=0}^{n-1} \exp(\ii \mu_r t) \mathbf{v}_r \mathbf{v}_r^*.
\end{equation}
In particular, by \Cref{eq::1} and \cref{eq::4}, we have
\begin{equation}\label{eq:7}
  U(t)_{ab}=\frac1n \sum_{r=0}^{n-1} \exp(\ii \mu_r t)
  \omega_n^{r(a-b)}=\frac 1 n \sum_{r=0}^{n-1} \exp\left(\ii \mu_r t+\ii\frac{2\pi r (a-b)}{n}\right).
\end{equation}

This expression is given in \cite[Proposition 1]{SSS07}.
Finally, our aim is to investigate whether there exist distinct integers
$a,b \in \Z_n$ and a positive real number $t$ such that
$\lvert U(t)_{ab} \rvert =1$. Let $M_r=\ii (\mu_r t+2\pi r (a-b)/n)$, for all $1 \leq r\leq n-1$. Obviously, $\lvert U(t)_{ab} \rvert \leq 1$, and equality holds if and only if for all $1 \leq r\leq n-1$, the exponents $\exp(M_r)$ are equal in \cref{eq:7}, or equivalently, $(M_{r+1}-M_{r})/(2\pi)\in \Z$. Let $t' = t/(2\pi)$. Then we have
\[
  \frac{M_{r+1}-M_{r}}{2\pi}=(\mu_{r+1}-\mu_r)t'+\frac{a-b}{n}\in \Z,
\]
for all $r=0,\ldots,n-1$. Since $\mu_r\in \Z$ for all $r=0,\ldots,n-1$, we have $t'\in \Q$.
At this point, we can obtain a necessary and sufficient condition as follows.

\begin{thm}\label{thm::3.1}
  Let $\Gamma=\IOCG_n(\D,\sigma)$ be an integral oriented circulant graph. Then for distinct $a,b \in \Z_n$, $\Gamma$ has $\PST$ between vertices $a$ and $b$
  if and only if there are integers $p$ and $q$ such that
  $\gcd(p,q)=1$ and
  \begin{equation}\label{eq:ccc}
    \frac{p}{q}(\mu_{j+1}-\mu_j)+\frac{a-b}{n}
    \in \Z,
  \end{equation}
  for all $j=0,\ldots,n-1$.
\end{thm}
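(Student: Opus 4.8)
The plan is to read the theorem off the entry formula \Cref{eq:7} together with the triangle-inequality bound already isolated in the discussion preceding the statement; the only genuine content to add is the passage from an arbitrary admissible time to one of the rational shape $p/q$. First I would record the equivalence between $\PST$ and a modulus-one entry: since $U(t)$ is unitary, each of its columns has unit Euclidean norm, so $|U(t)_{ab}|=1$ forces every other entry of that column to vanish, which is precisely $U(t)\mathbf{e}_b=\gamma\,\mathbf{e}_a$ for the unimodular scalar $\gamma=U(t)_{ab}$. Thus $\PST$ between $a$ and $b$ is equivalent to the existence of a time $t$ with $|U(t)_{ab}|=1$, and it is this condition that I analyze.

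Next I would apply the triangle inequality to \Cref{eq:7}. With $M_r=\ii(\mu_r t+2\pi r(a-b)/n)$ as above, every summand $\exp(M_r)$ is unimodular, so $|U(t)_{ab}|\le 1$, with equality exactly when all the $\exp(M_r)$ share a common argument, i.e. $\exp(M_0)=\cdots=\exp(M_{n-1})$. This coincidence is equivalent to $(M_{r+1}-M_r)/(2\pi\ii)\in\Z$ for every $r$; on setting $t'=t/(2\pi)$ it becomes the system
\[
  (\mu_{r+1}-\mu_r)\,t'+\frac{a-b}{n}\in\Z,\qquad r=0,\dots,n-1,
\]
which is exactly \Cref{eq:ccc} with $t'=p/q$. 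Hence $\PST$ between $a$ and $b$ is equivalent to the solvability of this system in the real unknown $t'$, and the theorem amounts to the assertion that any solution may be taken in lowest-terms rational form.

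For the forward implication I would argue that $t'$ is forced to be rational. If all differences $\mu_{r+1}-\mu_r$ vanished then $H$ would be a scalar matrix, $U(t)$ a scalar multiple of the identity, and no $\PST$ between distinct vertices could occur; so some $D=\mu_{r+1}-\mu_r$ is a nonzero integer, and the corresponding equation $D\,t'=k-(a-b)/n$ with $k\in\Z$ yields $t'=(kn-(a-b))/(Dn)\in\Q$. Reducing this rational to lowest terms produces coprime integers $p,q$ satisfying \Cref{eq:ccc}. Conversely, given coprime $p,q$ with \Cref{eq:ccc}, I would take $t'=p/q$; since each integrality condition is preserved under adding an integer to $t'$ (because every $\mu_{r+1}-\mu_r$ is an integer), I may replace $t'$ by $t'+N$ for $N$ large enough that $t=2\pi t'>0$, and then the equality case of the triangle inequality gives $|U(t)_{ab}|=1$, hence $\PST$ by the first step.

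The triangle-inequality step and the elementary arithmetic are routine, and the unitarity argument in the first paragraph is standard. The one point requiring care is the rationality argument in the forward direction: I must exclude the degenerate scalar case before solving a single equation for $t'$, after which coprimality of $p,q$ is just reduction to lowest terms. I expect this to be the only real obstacle, and it is mild.
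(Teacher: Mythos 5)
Your proof is correct and takes essentially the same route as the paper: the paper's own argument is exactly the discussion preceding the theorem, applying the triangle inequality to \Cref{eq:7}, characterizing the equality case by the integrality of $(M_{r+1}-M_r)/(2\pi)$, and deducing $t'=t/(2\pi)\in\Q$ from the integrality of the eigenvalues. The points you add---the unitarity argument for the equivalence of $\PST$ with $\lvert U(t)_{ab}\rvert=1$, the explicit exclusion of the scalar (all differences zero) case before solving for $t'$, and the shift $t'\mapsto t'+N$ to obtain a positive time---are details the paper leaves implicit rather than a genuinely different approach.
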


By \Cref{thm::3.1}, if there exists $\PST$ on integral oriented circulant graphs, then we can easily deduce the following corollary.
\begin{cor}\label{cor::4.1}
  Let $\Gamma=\IOCG_n(\D,\sigma)$ be an integral oriented circulant graph. Then for distinct $a,b \in \Z_n$ and $1\leq k\leq n$, $\Gamma$ has $\PST$ between vertices $a$ and $b$
  if and only if there are integers $p$ and $q$ such that
  $\gcd(p,q)=1$ and
  \begin{equation}\label{eq::6}
    \frac{p}{q}(\mu_{j+k}-\mu_j)+\frac{k(a-b)}{n}
    \in \Z,
  \end{equation}
  for all $j=0,\ldots,n-1$.
\end{cor}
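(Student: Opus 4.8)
The plan is to read \Cref{cor::4.1} as the ``step-$k$'' version of \Cref{thm::3.1}, which already settles the step-$1$ case, and to obtain it by a telescoping argument. Throughout I treat the eigenvalue indices modulo $n$, so that $\mu_n=\mu_0$; with this convention the $j=n-1$ instance of the hypothesis in \Cref{thm::3.1} is exactly $\frac{p}{q}(\mu_n-\mu_{n-1})+\frac{a-b}{n}\in\Z$. The whole corollary rests on the single identity
\begin{equation*}
  \frac{p}{q}(\mu_{j+k}-\mu_j)+\frac{k(a-b)}{n}
  =\sum_{\ell=0}^{k-1}\left(\frac{p}{q}(\mu_{j+\ell+1}-\mu_{j+\ell})+\frac{a-b}{n}\right),
\end{equation*}
which rewrites the step-$k$ quantity of \Cref{eq::6} as a sum of $k$ consecutive step-$1$ quantities of the type appearing in \Cref{thm::3.1}.

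For the forward implication, suppose $\Gamma$ has $\PST$ between $a$ and $b$. By \Cref{thm::3.1} there are coprime integers $p,q$ for which every summand on the right-hand side above lies in $\Z$, for all $j=0,\dots,n-1$ (indices mod $n$). Hence, for each fixed $k$ with $1\le k\le n$ and each $j$, the left-hand side is a finite sum of integers and therefore lies in $\Z$; this is precisely \Cref{eq::6}, and it is obtained with the very same pair $(p,q)$, so no new choice of $p,q$ is needed.

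For the backward implication, suppose coprime integers $p,q$ satisfy \Cref{eq::6} over the stated range of $k$. Specialising to $k=1$ returns exactly the condition $\frac{p}{q}(\mu_{j+1}-\mu_j)+\frac{a-b}{n}\in\Z$ for all $j$, which by \Cref{thm::3.1} is equivalent to $\PST$ between $a$ and $b$. Thus \Cref{eq::6} and the existence of $\PST$ are equivalent, and this is the sense in which the corollary is ``easily deduced'' from \Cref{thm::3.1}.

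The one place demanding care—and the step I expect to be the genuine, if mild, obstacle—is the bookkeeping of indices modulo $n$ in the telescoping sum: one must confirm that every step-$1$ difference appearing on the right-hand side, including the wrap-around term passing from $\mu_{n-1}$ to $\mu_n=\mu_0$, is covered by some instance $j'\in\{0,\dots,n-1\}$ of the hypothesis of \Cref{thm::3.1}, so that each summand is legitimately an integer. A related point worth stating explicitly is the reading of the quantifier on $k$: the substantive content lives entirely in the step-$1$ case, since $k=n$ yields only the trivial relation $a-b\in\Z$, so it is \Cref{eq::6} at $k=1$ that drives the backward direction while the telescoping identity propagates it to all larger $k$. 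Once the modular indexing is verified, both implications are immediate.
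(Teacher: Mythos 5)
Your proof is correct and matches the paper's intended deduction: the paper gives no explicit argument beyond citing \Cref{thm::3.1}, and the implied one is precisely your telescoping of the step-$1$ relation over $\ell=0,\dots,k-1$ (forward direction) together with the $k=1$ specialization (backward direction). Your remark on the quantifier over $k$ is also well taken---read with $k$ fixed, the converse fails (e.g.\ $k=n$ makes \Cref{eq::6} vacuous since $\mu_{j+n}=\mu_j$), so the reading you adopt, with $p,q$ allowed to depend on $k$ and the backward direction driven by $k=1$, is the only one under which the corollary is true.
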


In general, if an integral oriented circulant graph has $\PST$ between vertices $a$ and $b$, then the order of $a-b$ is two. Furthermore, the order-two element is unique, that is, $a-b=n/2$. By~\Cref{thm::3.1}, we give a necessary and sufficient condition for the existence of $\PST$ on between vertices $b+n/2$ and $b$ on integral oriented circulant graphs.

\begin{lem} \label{lem::3.2}
  Let $\Gamma=\IOCG_n(\D,\sigma)$ be an integral oriented circulant graph. Then for all $b \in \Z_n$, $\Gamma$ has $\PST$ between vertices $b+n/2$ and $b$ if and only if there exists a
  number $m \in \N$ such that
  \begin{equation} \label{eq:a}
    \vartheta_2(\mu_{j+1}-\mu_j)=m,
  \end{equation}
  for all
  $j=0,1,\ldots, n-1$.
\end{lem}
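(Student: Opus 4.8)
The plan is to reduce the whole statement to the criterion of \Cref{thm::3.1}, specialized to $a=b+n/2$. With this choice $a-b=n/2$, so $\frac{a-b}{n}=\frac12$; in particular the criterion depends on $a,b$ only through $a-b=n/2$, so the clause ``for all $b\in\Z_n$'' is automatic once the condition holds for a single $b$. Writing $\delta_j=\mu_{j+1}-\mu_j$ (an integer, since $\Gamma$ is integral), \Cref{thm::3.1} says that $\Gamma$ has $\PST$ between $b+n/2$ and $b$ if and only if there are coprime integers $p,q$ with
\[
  \frac{p}{q}\delta_j+\frac12\in\Z\qquad(0\le j\le n-1).
\]
I would immediately recast this as the single clean condition that $\frac{2p\delta_j}{q}$ is an \emph{odd integer} for every $j$. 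The lemma then becomes a $2$-adic valuation bookkeeping exercise.

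For the direction ($\PST \Rightarrow$ constancy of $\vartheta_2$), I first observe that no $\delta_j$ can vanish: if $\delta_j=0$, the displayed condition would force $\frac12\in\Z$. Hence every $\delta_j$ is a nonzero integer and $\vartheta_2(\delta_j)$ is a finite nonnegative integer. Applying $\vartheta_2$ to the odd integer $\frac{2p\delta_j}{q}$ gives $\vartheta_2\bigl(\tfrac{2p\delta_j}{q}\bigr)=0$, i.e.
\[
  1+\vartheta_2(p)+\vartheta_2(\delta_j)-\vartheta_2(q)=0,
\]
so that $\vartheta_2(\delta_j)=\vartheta_2(q)-\vartheta_2(p)-1$ is independent of $j$. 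Taking $m$ to be this common value yields \Cref{eq:a}, with $m\in\N$ because each $\delta_j\neq0$. I note that coprimality of $p,q$ is not even needed here: the valuation identity alone forces constancy.

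For the converse (constancy $\Rightarrow\PST$) I would exhibit an explicit pair $(p,q)$ that works uniformly in $j$. Assuming $\vartheta_2(\delta_j)=m$ for all $j$, write $\delta_j=2^{m}u_j$ with $u_j$ odd, and take $p=1$, $q=2^{m+1}$, which are coprime. Then
\[
  \frac{p}{q}\delta_j+\frac12=\frac{u_j}{2}+\frac12=\frac{u_j+1}{2}\in\Z,
\]
since $u_j$ is odd, so \Cref{thm::3.1} delivers $\PST$ between $b+n/2$ and $b$ for every $b$.

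The only genuinely delicate point is that one fixed choice of $p,q$ must satisfy the membership in $\Z$ simultaneously for all $j$; this is precisely why the right hypothesis is a single \emph{uniform} valuation $m$ rather than a $j$-dependent one, and it is what makes the sufficiency direction work with $q=2^{m+1}$. Everything else—the specialization to $a-b=n/2$, the reformulation through odd integers, and the additivity of $\vartheta_2$ over products and quotients of nonzero rationals—is routine.
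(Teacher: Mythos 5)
Your proposal is correct and takes essentially the same approach as the paper: specialize \Cref{thm::3.1} to $a-b=n/2$, deduce in the forward direction that $\vartheta_2(\mu_{j+1}-\mu_j)$ equals the $j$-independent quantity $\vartheta_2(q)-\vartheta_2(p)-1$, and verify sufficiency with the explicit choice $p=1$, $q=2^{m+1}$. Your $2$-adic valuation bookkeeping is a tidier rendering of the paper's argument (which writes $\mu_{j+1}-\mu_j=2^{\alpha_j}m_j$ and argues via oddness of $2^{\alpha_j+1}\tfrac{p}{q}m_j$), and your explicit observation that no difference $\mu_{j+1}-\mu_j$ can vanish settles a point the paper leaves implicit.
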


\begin{proof}
  Let $\mu_{j+1}-\mu_{j}=2^{\alpha_{j}} m_{j}$ where $\alpha_{j}=\vartheta_{2}\left(\mu_{j+1}-\mu_{j}\right) \geq 0$ and $m_{j}$ is an odd integer for each $j=0,1, \ldots, n-1$.

  $(\Rightarrow:)$ Suppose that $\Gamma$ has $\PST$ between vertices $b+n/2$ and $b$. According to the~\Cref{thm::3.1}, there exist relatively prime integers $p, q$ such that
  \begin{equation}\label{eq:8}
    \frac{p}{q}(\mu_{j+1}-\mu_j)+\frac{1}{2}
    \in \Z,
  \end{equation}
  for all
  $j=0,1,\ldots, n-1$. Rewrite \Cref{eq:8} in the following form:
  \[
    \frac{2^{\alpha_{j}+1} \frac{p}{q} m_{j}+1}{2} \in \Z .
  \]
  From the last expression we can conclude that $2^{\alpha_{j}+1} \frac{p}{q} m_{j}$ are odd, for each $j=0,1, \ldots, n-1$.
  Since $\gcd(p, q)=1$, $p$ is odd, we can obtain $q=2^{\alpha_{j}+1}m_{q}$ and $m_{q}\mid m_{j}$, $m_{j}$ is an odd integer for each $j=0,1, \ldots, n-1$. Therefore, $\vartheta_{2}(\mu_{j+1}-\mu_{j})=\alpha_{j}=\vartheta_{2}(q)-1$, for each $j=0,1, \ldots, n-1$. Let $m=\vartheta_{2}(q)-1\in \N$. Then we can obtain $\vartheta_2(\mu_{j+1}-\mu_j)=m$.

  $(\Leftarrow:)$ Now suppose $\vartheta_2(\mu_{j+1}-\mu_j)=m$. Put $q=2^{m+1}$ and $p=1$. Then
  \[
    \frac{p\left(\mu_{j+1}-\mu_{j}\right)}{q}+\frac{1}{2}=\frac{m_{j}+1}{2} \in \Z,
  \]
  for all $j=0,1, \ldots, n-1$. Therefore, $\Gamma$ has $\PST$.
\end{proof}

\begin{lem}\label{lem::3.3}
  Let $n$ be a positive integer and let $\D$ be an odd positive integer set. Then for all odd integer $1\leq q\leq n$ and each positive integers $k$, $\sum_{d\in \D}(-1)^{k}c_{d}(q)$ have the same parity if and only if $\D=\{1\}$.
\end{lem}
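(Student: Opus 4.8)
The plan is to convert the statement into a parity count. Because $-x\equiv x\pmod 2$, the sign $(-1)^k$ never affects parity, so the assertion is equivalent to saying that $f(q):=\sum_{d\in\D}c_d(q)\bmod 2$ is independent of the odd integer $q\in[1,n]$ if and only if $\D=\{1\}$. (I take $\D$ nonempty, and I use $\max\D\le n$, which holds in the intended application since there every $d\in\D$ divides $n$.) The tool I would set up first is a parity criterion for Ramanujan's sum. Let $\vartheta_p$ denote the $p$-adic valuation (so $\vartheta_2$ is the function from the introduction) and write $\mathrm{rad}(d)=\prod_{p\mid d}p$. For odd $d$ and odd $q$ I claim
\begin{equation*}
  c_d(q)\equiv 1\pmod 2 \quad\Longleftrightarrow\quad \gcd(d,q)=\frac{d}{\mathrm{rad}(d)} \quad\Longleftrightarrow\quad \vartheta_p(q)=\vartheta_p(d)-1 \text{ for every prime } p\mid d.
\end{equation*}
To prove it I would use the multiplicativity of $c_n(q)$ in $n$, giving $c_d(q)=\prod_{p\mid d}c_{p^{\vartheta_p(d)}}(q)$, and evaluate the prime-power case by H\"older's identity: if $a=\vartheta_p(d)$, then $c_{p^a}(q)$ equals $\phi(p^a)$ when $\vartheta_p(q)\ge a$, equals $-p^{a-1}$ when $\vartheta_p(q)=a-1$, and equals $0$ otherwise; among these only the middle value is odd. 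Since a product of integers is odd exactly when each factor is odd, the criterion follows.

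Consequently $f(q)\equiv\#\{d\in\D:\gcd(d,q)=d/\mathrm{rad}(d)\}\pmod 2$. One direction is immediate: if $\D=\{1\}$ then $c_1(q)=1$ for every $q$ by \Cref{prop::2.2}, so $f\equiv 1$ and the parity is constant. For the converse I would prove the contrapositive, namely that if $\D\neq\{1\}$ (so that $\D$ contains some element $>1$), then $f$ is non-constant on the odd integers of $[1,n]$.

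The tempting approach is to take $d_0=\max\D$ and shift a single valuation of $q$ so that $d_0$ drops out of the active set; the difficulty, which I expect to be the crux of the proof, is that other elements of $\D$ sharing the prime support of $d_0$ may enter or leave the active set simultaneously, and their net parity is not directly controllable. I would resolve this by induction on the number $r$ of distinct primes dividing the elements of $\D$. Fixing one such prime $p$ and writing each odd $q$ as $q=p^{v}x$ with $p\nmid x$, the criterion factors prime by prime, yielding
\begin{equation*}
  f(p^{v}x)\equiv F_0(x)+F_{v+1}(x)\pmod 2,
\end{equation*}
where, for each $e\ge 0$, $F_e$ is the analogous parity count attached to $\D^{(e)}:=\{d/p^{e}:d\in\D,\ \vartheta_p(d)=e\}$, a set of integers all coprime to $p$ and hence supported on at most $r-1$ primes.

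To finish, suppose $f$ is constant. Subtracting two such slices cancels $F_0$ and shows that all the functions $F_{v+1}$ coincide; taking $v$ large enough that $\D^{(v+1)}=\varnothing$ shows this common function is $0$ and that $F_0$ is constant, whence every $F_e$ is constant. By the induction hypothesis each $\D^{(e)}$ is then empty or equal to $\{1\}$, which forces every element of $\D$ to be a power of $p$. For a set of $p$-powers the criterion reads ``$c_{p^a}(q)$ is odd iff $\vartheta_p(q)=a-1$'', so at an odd $q=p^{v}$ exactly the exponents $a\in\{0,v+1\}$ that occur in $\D$ are active; since $\D$ contains some $p^{a}$ with $a\ge 1$, writing $a_{\max}$ for the largest such exponent, the choices $v=a_{\max}-1$ and $v=a_{\max}$ give values of $f$ of opposite parity, both realized by $q\le\max\D\le n$. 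This contradicts constancy and completes the induction; the base case $r=0$ is trivial, since then $\D\subseteq\{1\}$. (The only routine bookkeeping I am suppressing is checking that all evaluation points $p^{v}x$ produced above stay within $[1,n]$, which is where the hypothesis $\max\D\le n$ is used.)
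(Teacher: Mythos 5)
Your approach is sound in outline and is genuinely different from --- and far more complete than --- the paper's own argument, which establishes the converse only by testing a two-element set of odd primes against \Cref{prop::2.2} and never treats prime powers or composite moduli. Your parity criterion is correct: by multiplicativity of $c_d(q)$ and H\"older's evaluation at prime powers, for odd $d,q$ the sum $c_d(q)$ is odd exactly when $\vartheta_p(q)=\vartheta_p(d)-1$ for every prime $p\mid d$; the slicing identity $f(p^{v}x)\equiv F_0(x)+F_{v+1}(x)\pmod 2$ and the endgame for sets of $p$-powers are also correct. You were moreover right to impose a relation between $\D$ and $n$: as literally stated the lemma is false (for $\D=\{3,9\}$ and $n=7$ one checks $c_3(q)+c_9(q)$ is odd for every odd $q\le 7$), so some such hypothesis is indispensable.

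The genuine gap is precisely the bookkeeping you dismiss as routine; under the hypothesis $\max\D\le n$ the inductive step fails. To invoke the inductive hypothesis for a slice $\D^{(e)}$ you need $F_e$ to be constant on all odd $x\le \max\D^{(e)}$, but your argument only certifies constancy of $F_e(x)$ from the values $f(p^{v}x)$ with $v$ up to $A=\max_{d\in\D}\vartheta_p(d)$ (the step $v=A$ is what isolates $F_0$ and forces the others to vanish), and these evaluation points are admissible only when $p^{A}x\le n$, i.e.\ $x\le n/p^{A}$. Nothing makes $n/p^{A}\ge\max\D^{(e)}$ follow from $\max\D\le n$: take $\D=\{9,75\}$ and $n=75$. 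With $p=3$, $A=2$ one has $\D^{(1)}=\{25\}$ but $n/p^{A}<9<25$, and the alternative choice $p=5$ fails the same way ($\D^{(0)}=\{9\}$ while $n/25=3$), so the induction hypothesis can never be applied --- even though the conclusion of the lemma is true there, since $f(1)=0\neq 1=f(3)$. The repair is to assume what the application actually supplies: in \Cref{lem::3.4} the moduli are the numbers $n/(4d)$ with $d\mid n/4$, so every element of $\D$ divides $n$ and hence $\mathrm{lcm}(\D)\le n$. Formulate the induction as ``if $\D\neq\{1\}$, then nonconstancy is witnessed by two odd divisors of $\mathrm{lcm}(\D)$''; then every point $p^{v}x$ you use (with $v\le A$ and $x$ dividing the $p$-free part of $\mathrm{lcm}(\D)$, as the inductive witnesses do) divides $\mathrm{lcm}(\D)$ and so lies in $[1,n]$, and your argument closes without further change.
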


\begin{proof}
  Since addition and subtraction do not affect parity. Without loss of generality, we consider $\sum_{d\in \D}c_{d}(q)$. $(\Leftarrow:)$ For set $\D=\{1\}$, since $c_{1}(q)=1$, we can derive $\sum_{d\in \D}c_{d}(q)$ have the same parity for all $q\in \N$. $(\Rightarrow:)$ For set $\D\neq\{1\}$. Suppose that there exists an odd prime set $\D'=\{d_1, d_2\}\neq \{1\}$ such that $\sum_{d\in \D'}c_{d}(q)$ have same parity for odd integer number $1\leq q\leq n$. By~\Cref{prop::2.2}. If $q=d_1$, then $\sum_{d\in \D'}c_{d}(q)=c_{d_1}(d_1)+c_{d_2}(d_1)=d_1-1-1=d_1-2\in 2\N+1$. If $q= d'$, $d_1\nmid d'$ and $d_2\nmid d'$, then $\sum_{d\in \D'}c_{d}(q)=c_{d_1}(d')+c_{d_2}(d')=-1-1=-2\in 2\N$, which leads to a contradiction.
\end{proof}
Let $\Gamma=\IOCG_n(\D,\sigma)$ be an integral oriented circulant graph. By~\Cref{thm::1.5}, the eigenvalues of $\Gamma$ are $\mu_j=  2 \sum_{d\in \D_2}\sigma(d) (-1)^{(\frac{n}{4d}-1)/2} (-1)^{(j+1)/2} c_{\frac{n}{4d}} (j)$ for $j\in 2\N+1$. By \Cref{lem::3.3}, we obtain \Cref{lem::3.4}.
\begin{lem}\label{lem::3.4}
  Let $n\equiv 0 \pmod 4$. Then $\mu_j/2$ have the same parity for $j\in 2\N+1$ if and only if $\D_2=\{n/4\}$.
\end{lem}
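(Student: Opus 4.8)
The plan is to start from the explicit formula for the eigenvalues at odd indices recorded immediately before the statement, namely
\[
  \mu_j = 2\sum_{d\in\D_2}\sigma(d)\,(-1)^{(\frac{n}{4d}-1)/2}\,(-1)^{(j+1)/2}\,c_{\frac{n}{4d}}(j),
  \qquad j\in 2\N+1,
\]
and to reduce the parity question to an application of \Cref{lem::3.3}. Dividing by $2$ gives $\mu_j/2 = (-1)^{(j+1)/2}\,\Sigma_j$, where $\Sigma_j := \sum_{d\in\D_2}\sigma(d)(-1)^{(\frac{n}{4d}-1)/2}c_{\frac{n}{4d}}(j)$. The first step is to note that the factor $(-1)^{(j+1)/2}$ depends only on $j$ and is a global unit $\pm 1$; since $x$ and $-x$ always share the same parity, $\mu_j/2$ and $\Sigma_j$ have the same parity. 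Hence the family $\{\mu_j/2 : j\in 2\N+1\}$ is constant modulo $2$ if and only if $\{\Sigma_j : j\in 2\N+1\}$ is.

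Next I would reindex the sum. Since $d\in\D_2$ means $\vartheta_2(n/d)=2$, each quotient $n/(4d)$ is an odd positive integer, and the map $d\mapsto n/(4d)$ is a bijection from $\D_2$ onto a set $\D'$ of odd positive integers. Writing $\varepsilon_d = \sigma(d)(-1)^{(\frac{n}{4d}-1)/2}\in\{1,-1\}$, the reindexing turns $\Sigma_j$ into a signed sum $\sum_{d'\in\D'}\varepsilon_{d'}\,c_{d'}(j)$ of Ramanujan sums over odd moduli. Because addition and subtraction do not affect parity (the same observation opening the proof of \Cref{lem::3.3}), each sign $\varepsilon_{d'}$ is immaterial, so $\Sigma_j$ has the same parity as $\sum_{d'\in\D'}c_{d'}(j)$ for every odd $j$.

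Finally I would invoke \Cref{lem::3.3} with the odd parameter $q=j$ ranging over $1\le j\le n-1$: the quantity $\sum_{d'\in\D'}c_{d'}(j)$ has the same parity for all odd $j$ if and only if $\D'=\{1\}$. Translating back through the bijection $d\mapsto n/(4d)$, the condition $\D'=\{1\}$ is precisely $n/(4d)=1$ for the unique $d\in\D_2$, i.e. $\D_2=\{n/4\}$. Chaining these equivalences yields the lemma.

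The step I expect to require the most care is the bijective reindexing together with the hypotheses of \Cref{lem::3.3}: one must confirm that $n/(4d)$ is genuinely odd for every $d\in\D_2$ (so that $\D'$ is an admissible odd set in the sense of \Cref{lem::3.3}), that $d\mapsto n/(4d)$ is injective, and that the index range of $\mu_j$ (odd integers in $[1,n-1]$) lies within the range $1\le q\le n$ demanded by \Cref{lem::3.3}. The sign bookkeeping—isolating the $j$-dependent global sign $(-1)^{(j+1)/2}$ from the $d$-dependent signs $\varepsilon_d$ and arguing that neither affects parity—is routine but should be stated cleanly so that the reduction to \Cref{lem::3.3} is rigorous.
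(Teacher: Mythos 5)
Your proposal is correct and takes essentially the same route as the paper: the paper obtains \Cref{lem::3.4} by specializing \Cref{thm::1.5} to odd $j$, giving $\mu_j = 2\sum_{d\in \D_2}\sigma(d)(-1)^{(\frac{n}{4d}-1)/2}(-1)^{(j+1)/2}c_{\frac{n}{4d}}(j)$, and then invoking \Cref{lem::3.3}, which is exactly your reduction. The paper compresses this into a single sentence, so your explicit reindexing $d\mapsto n/(4d)$ onto a set of odd moduli and the sign bookkeeping merely spell out the details the paper leaves implicit.
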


For $\D_2=\{n/4\}$, by \Cref{thm::1.5}, we extract the main features of the eigenvalues to obtain \Cref{lem::3.5}.
\begin{lem}\label{lem::3.5}
  Let $\Gamma=\IOCG_n(\D,\sigma)$ be an integral oriented circulant graph. If $\D_2=\{n/4\}$, then
  \begin{equation*}
    \mu_j\in
    \begin{cases}
      4\Z,                          & \text{if $j/2^{i-2}$ is odd and $i\geq 3$,} \\
      2 \sigma(n/4) (-1)^{(j+1)/2}, & \text{if $j$ is odd,}                       \\
      0,                            & \text{otherwise,}
    \end{cases}
  \end{equation*}
  for $2\leq i \leq \vartheta_2(n)$, $0\leq j \leq n-1$, where $\D=\bigcup_{i=2}^{\vartheta_2(n)}\D_i \subseteq \left\{ d: d\mid n/4\right\}$.
\end{lem}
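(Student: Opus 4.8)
The plan is to read off each of the three cases directly from the eigenvalue formula in \Cref{thm::1.5}, specializing it to the hypothesis $\D_2=\{n/4\}$. The organizing observation is that, for a fixed $j$, the index $i$ appearing in \Cref{thm::1.5} is forced to equal $\vartheta_2(j)+2$: writing $n/d=2^i m$ with $m$ odd one has $\vartheta_2(n/d)=i$, and the summand $s_{n/d}(j)$ is nonzero only when $\tfrac{j}{2^{i-2}}$ is odd, i.e.\ when $\vartheta_2(j)=i-2$. Hence the sum defining $\mu_j$ runs precisely over $d\in\D_{\vartheta_2(j)+2}$, and the argument splits according to the value of $\vartheta_2(j)$.

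First I would treat the case $j$ odd, that is $i=2$. Here the only contributing divisors lie in $\D_2=\{n/4\}$, so the sum collapses to the single term $d=n/4$. Substituting $d=n/4$ gives $\tfrac{n}{2^i d}=\tfrac{n}{4\cdot(n/4)}=1$, whence $c_1(j)=1$ by \Cref{prop::2.2}, the sign $(-1)^{(\frac{n}{2^i d}-1)/2}=(-1)^0=1$, and the power $2^{i-1}=2$. Collecting these factors yields $\mu_j=2\,\sigma(n/4)\,(-1)^{(j+1)/2}$, which is exactly the second case.

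Next I would handle the case $\tfrac{j}{2^{i-2}}$ odd with $i\geq 3$ (equivalently $j$ even with $\D_{\vartheta_2(j)+2}\neq\emptyset$). In every summand of the \Cref{thm::1.5} formula the factor $2^{i-1}$ now satisfies $2^{i-1}\in 4\Z$ because $i\geq 3$. Since each Ramanujan sum $c_{\frac{n}{2^i d}}(\cdot)$ is an integer and the remaining factors $\sigma(d)$ and the two sign terms are $\pm1$, each summand lies in $4\Z$, and hence so does $\mu_j$; this gives the first case. Finally, the remaining values of $j$ — namely $j=0$ and those even $j$ for which $\D_{\vartheta_2(j)+2}=\emptyset$ — make the relevant sum empty (or fall under the ``otherwise'' branch of \Cref{thm::1.5}), so $\mu_j=0$, completing the third case.

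I do not expect a genuine obstacle here, since the whole statement is a direct specialization of \Cref{thm::1.5}. The only point requiring care is the bookkeeping that ties the summation index $i$ to $\vartheta_2(j)$ and correctly identifies, for each valuation of $j$, which $\D_i$ is active; once that correspondence is made explicit, the three cases follow by substituting the distinguished divisor $d=n/4$ (for $i=2$) and by extracting the factor $2^{i-1}$ (for $i\geq 3$).
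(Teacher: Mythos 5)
Your proposal is correct and takes essentially the same route as the paper: the paper obtains \Cref{lem::3.5} by direct specialization of \Cref{thm::1.5} to the hypothesis $\D_2=\{n/4\}$, stating only that one ``extracts the main features of the eigenvalues'' and giving no further detail. Your case analysis (the single term $d=n/4$ with $c_1(j)=1$ for $i=2$, the factor $2^{i-1}\in 4\Z$ for $i\geq 3$, and the empty/``otherwise'' cases giving $0$) is precisely that specialization, written out in full.
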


\begin{proof}[\bf Proof of \Cref{thm::1.6}]
\ref{thm1.2.1} $\Rightarrow$ \ref{thm1.2.2}
  Suppose that $\D_2\neq\{n/4\}$. By~\Cref{lem::3.4}, $\mu_{j}/2$ have no the same parity for all $j\in 2\N+1$. This implies that $(\mu_{j+1}-\mu_{j})/2$ have no the same parity for all $j\in 2\N+1$. At this point, \Cref{eq:a} does not hold. By~\Cref{lem::3.2}, $\Gamma$ has no $\PST$, a contradiction.

  \ref{thm1.2.1} $\Leftarrow$ \ref{thm1.2.2}
  If $\D_2= \{n/4\}$, by \Cref{lem::3.5}, then $\lvert\mu_{j+1}-\mu_{j}\rvert\in 4\Z \pm 2=2(2\Z \pm 1)$ for all $0 \leq j\leq n-1$. This implies that $\vartheta_2(\mu_{j+1}-\mu_{j})=1$ for all $0 \leq j\leq n-1$. By~\Cref{lem::3.2}, $\Gamma$ has $\PST$ between vertices $b+n/2$ and $b$.
\end{proof}
From the above characterization, we can calculate the number of integral oriented circulant graphs of a given order having $\PST$.
\begin{cor}
  Let $\Gamma=\IOCG_n(\D,\sigma)$ be an integral oriented circulant graph. Then the number of $\Gamma$ having $\PST$ is
  \begin{equation*}
    \lvert\Gamma\rvert= 2\times 3^{\tau\left( \frac n 4\right) -\tau\left( \frac{n}{2^{\vartheta_2(n)}}\right)},\quad n\in 4\N,
  \end{equation*}
  where $\tau(n)$ denotes the number of the divisors of $n$.
\end{cor}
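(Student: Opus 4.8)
The plan is to count, for each fixed $n\in 4\N$, the pairs $(\D,\sigma)$ satisfying the criterion of \Cref{thm::1.6}, namely $\D_2=\{n/4\}$, and then translate that count into the divisor-function form stated. Write $n=2^{t}m$ with $m$ odd and $t=\vartheta_2(n)\geq 2$. Since $\D$ ranges over subsets of the divisors of $n/4=2^{t-2}m$ and each element $d\in\D$ carries a sign $\sigma(d)\in\{1,-1\}$, I would first organize the divisors of $n/4$ according to the value $i=\vartheta_2(n/d)$, which is precisely what determines into which $\D_i$ a given $d$ falls.

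Every divisor of $n/4$ has the form $d=2^{a}e$ with $0\leq a\leq t-2$ and $e\mid m$, whence $n/d=2^{t-a}(m/e)$ and $\vartheta_2(n/d)=t-a$. Setting $P_i=\{2^{t-i}e:e\mid m\}$ therefore gives a disjoint partition $\{d:d\mid n/4\}=\bigsqcup_{i=2}^{t}P_i$ with $|P_i|=\tau(m)$ for each $i$, where $d\in P_i$ is exactly the condition $\vartheta_2(n/d)=i$. In particular $n/4=2^{t-2}m\in P_2$, and $\D_i=\D\cap P_i$ for each $i$.

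With this bookkeeping in place the count factors into independent choices. The constraint $\D_2=\{n/4\}$ forces $n/4\in\D$ while excluding every other element of $P_2$; since $\sigma(n/4)\in\{1,-1\}$, this contributes a factor of $2$. For each divisor $d\in\bigcup_{i=3}^{t}P_i$ there is no constraint: $d$ may be omitted from $\D$, or included with $\sigma(d)=1$, or included with $\sigma(d)=-1$, giving three independent choices per divisor. Hence the number of graphs of order $n$ having $\PST$ is
\[
  2\cdot 3^{\left|\bigcup_{i=3}^{t}P_i\right|}=2\cdot 3^{(t-2)\tau(m)}.
\]

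It then remains to rewrite the exponent. Using that $m$ is odd, $\tau(n/4)=\tau(2^{t-2}m)=(t-1)\tau(m)$ and $\tau(n/2^{\vartheta_2(n)})=\tau(m)$, so $(t-2)\tau(m)=\tau(n/4)-\tau(n/2^{\vartheta_2(n)})$, which yields the claimed formula. The argument is essentially routine once the reduction is set up; the one place to be careful is the divisor bookkeeping of the middle step — verifying that the sets $P_i$ genuinely partition the divisors of $n/4$ and that $n/4$ is the unique element of $P_2$ pinned down by the $\PST$ condition, so that exactly the divisors with $i\geq 3$ remain free (each contributing a factor of $3$) while those with $i=2$ are forced.
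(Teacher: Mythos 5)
Your proof is correct and follows essentially the same route as the paper's: both reduce, via \Cref{thm::1.6}, to counting pairs $(\D,\sigma)$ with $\D_2=\{n/4\}$, assigning a factor of $2$ to the sign choice for $n/4$ and a factor of $3$ to each divisor $d\mid n/4$ with $\vartheta_2(n/d)\geq 3$. The only cosmetic difference is that the paper obtains the factor $3^{|\widetilde{\D}|}$ by summing $\binom{|\widetilde{\D}|}{k}2^k$ via the Binomial Theorem, whereas you count three choices per free divisor directly; your explicit partition into the sets $P_i$ and the verification that the exponent $(t-2)\tau(m)$ equals $\tau(n/4)-\tau\bigl(n/2^{\vartheta_2(n)}\bigr)$ merely fill in computations the paper asserts without detail.
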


\begin{proof}
  Based on mapping $\sigma$. For $n\in
    4\N$, $d=n/4$ have two choices, the cardinality of the set $\widetilde{\D}=\{d : d \mid n/4,\ n/d\in
    4\N\}\setminus\D_2$ is equal to $\tau\left(n/4\right) -\tau\left( n/2^{\vartheta_2(n)}\right)$, and each $d$ in $\widetilde{\D}$ have two choices. According to the Binomial Theorem (see~\cite[Theorem 5.2.2]{Br10}), we can obtain the result.
\end{proof}

\section{Proof of \texorpdfstring{\Cref{thm::1.7}}{Theorem 1.3}}\label{sec::4}
In the previous section, we prove that there exists $\PST$ on integral oriented circulant graphs between vertices $b+n/2$ and $b$, and a sufficient necessary condition is also obtained. In this section, we will find $\MST$ on integral oriented circulant graphs.

\begin{lem}
  Let $\Gamma=\IOCG_n(\D,\sigma)$ be an integral oriented circulant graph. For any two distinct vertices $a,b\in \mathbb{Z}_n$, if $\Gamma$ has $\PST$ between $a$ and $b$, then $a=b+kn/4$ for some $k\in\{1,2,3\}$.
\end{lem}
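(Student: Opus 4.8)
The plan is to translate the perfect-state-transfer condition into an arithmetic statement about the (integer) eigenvalues, and then pin down the $2$-adic part and the odd part of $c:=a-b$ separately. First I would note that, since $\Gamma$ has $\PST$ between two \emph{distinct} vertices, $\Gamma$ cannot be the empty graph (whose transition matrix is the identity, so it admits no $\PST$ between distinct vertices); hence by \Cref{thm::IOCG} we have $n\equiv 0\pmod 4$, and I write $n=2^{T}N$ with $N$ odd and $T\geq 2$. The goal is exactly $n\mid 4c$, i.e.\ $4c\equiv 0\pmod n$. Applying \Cref{cor::4.1} with $j=0$ and using $\mu_0=0$, the hypothesis yields coprime integers $p,q$ with
\[
  \frac{p}{q}\,\mu_m+\frac{mc}{n}\in\Z\qquad(\ast)
\]
for every $m$ (this is the telescoped form of \Cref{thm::3.1}).

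Two cheap consequences come next. By \Cref{thm::1.5}, since every $d\in\D$ divides $n/4$, the eigenvalue $\mu_m$ vanishes whenever $\vartheta_2(m)\geq T-1$; taking $m=2^{T-1}$ in $(\ast)$ forces $\tfrac{2^{T-1}c}{n}\in\Z$, hence $2N\mid c$ and in particular $N\mid c$. Also $q$ must be even: if $q$ were odd then each $\tfrac{p}{q}\mu_m$ would be a $2$-adic integer, so $(\ast)$ at $m=1$ would force $\vartheta_2(c)\geq T$, and together with $N\mid c$ this gives $n\mid c$, contradicting $a\neq b$. Thus $\vartheta_2(q)=s\geq 1$ and $p$ is odd.

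The heart of the argument, and the step I expect to be the main obstacle, is to show that $\PST$ forces $\D_2=\{n/4\}$. If $\D_2=\emptyset$, then $\mu_m=0$ for all odd $m$, and $(\ast)$ at $m=1$ gives $n\mid c$, a contradiction. Suppose instead $\D_2\neq\emptyset$ and $\D_2\neq\{n/4\}$. Then \Cref{lem::3.4} tells us that the numbers $\mu_m/2$ (for odd $m$) do not all share the same parity, so there are odd indices $m_1,m_2$ with $\vartheta_2(\mu_{m_1})\geq 2$ (possibly $\mu_{m_1}=0$) and $\vartheta_2(\mu_{m_2})=1$. I would then run a $2$-adic valuation comparison: assuming $\vartheta_2(c)<T$, the term $\tfrac{m_2c}{n}$ has valuation $\vartheta_2(c)-T<0$, so $(\ast)$ at $m_2$ can hold only if $\tfrac{p}{q}\mu_{m_2}$ has the same negative valuation, forcing $1-s=\vartheta_2(c)-T$; but then at $m_1$ the two summands have unequal valuations (the $c$-term being strictly smaller, since $\vartheta_2(\mu_{m_1})-s\geq \vartheta_2(c)-T+1$), so their sum has negative valuation and cannot be an integer, a contradiction. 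Hence $\vartheta_2(c)\geq T$, and with $N\mid c$ we again obtain $n\mid c$, impossible. Therefore $\D_2=\{n/4\}$. The delicate point is precisely that one needs \emph{two} odd indices with distinct $2$-adic valuations of $\mu_m$, which is exactly what \Cref{lem::3.4} supplies once $\D_2\neq\{n/4\}$.

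Finally, with $\D_2=\{n/4\}$ in hand I would invoke \Cref{lem::3.5}, which gives $\mu_j=2\sigma(n/4)(-1)^{(j+1)/2}$ for odd $j$; in particular $\mu_1=-2\sigma(n/4)$ and $\mu_3=2\sigma(n/4)$, so $\mu_1+\mu_3=0$. Adding the two instances of $(\ast)$ at $m=1$ and $m=3$ makes the eigenvalue contribution cancel and leaves $\tfrac{4c}{n}\in\Z$, i.e.\ $n\mid 4c$. Since $c\not\equiv 0\pmod n$, the only possibilities are $c\in\{n/4,\,n/2,\,3n/4\}$, which is the assertion $a=b+kn/4$ with $k\in\{1,2,3\}$.
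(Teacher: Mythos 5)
Your proof is correct, and it is in fact more complete than the paper's own. Both arguments share the same skeleton: convert $\PST$ into the rationality condition of \Cref{thm::3.1}/\Cref{cor::4.1}, make the eigenvalue contribution at odd indices cancel, and read off $n\mid 4(a-b)$; your final step (adding $(\ast)$ at $m=1$ and $m=3$ and using $\mu_1+\mu_3=0$) is just a telescoped version of the paper's step (applying \Cref{cor::4.1} with $k=4$ and using $\mu_{j+4}-\mu_j=0$ for odd $j$). The genuine difference is how $\D_2=\{n/4\}$ enters. The paper justifies the vanishing of $\mu_{j+4}-\mu_j$ by citing \Cref{thm::1.6} and \Cref{lem::3.5}; but \Cref{lem::3.5} is only applicable when $\D_2=\{n/4\}$, and \Cref{thm::1.6} delivers that equality only from $\PST$ between $b$ and $b+n/2$, which is not among the lemma's hypotheses --- it is, in effect, part of what is being proven. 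So the paper's proof silently assumes a standing hypothesis that is only legitimate in its later MST context. Your 2-adic argument supplies exactly this missing step: having first secured $N\mid c$ (from $\mu_{2^{T-1}}=0$) and $\vartheta_2(q)\geq 1$, you play two odd indices with different valuations of $\mu_m$ (whose existence, when $\emptyset\neq\D_2\neq\{n/4\}$, is exactly what \Cref{lem::3.4} provides) against each other to force $n\mid c$, a contradiction; and you correctly split off the case $\D_2=\emptyset$, which \Cref{lem::3.4} tacitly excludes. What your route costs in length it buys back in rigor: the lemma is established under its stated hypothesis alone, rather than under an unstated assumption of antipodal $\PST$.
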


\begin{proof}
  Suppose that $\Gamma$ has $\PST$ between vertices $a$ and $b$. By~\Cref{cor::4.1}, there are integers $p$ and $q$ such that
  $\gcd(p,q)=1$ and
  \begin{equation*}
    \frac{p}{q}(\mu_{j+4}-\mu_j)+\frac{4(a-b)}{n}
    \in \Z,
  \end{equation*}
  for all $j=0,\ldots,n-1$.
  By~\Cref{thm::1.6} and~\Cref{lem::3.5}, we have $\mu_{j+4}-\mu_{j}=0$ whenever $j$ is odd, and hence $4(a-b)/n\in \Z$. Therefore, $a-b\in\{n/4,n/2,3n/4\}$, and the result follows.
\end{proof}

For all $b \in \Z_n$, if $\Gamma$ has $\PST$ between vertices $b+n/4$ and $b$, by~\Cref{cor::4.1}, then \Cref{eq::6} there are integers $p$ and $q$ such that
$\gcd(p,q)=1$
\begin{equation}\label{eq:001}
  \frac{p}{q}(\mu_{j+2}-\mu_j)+\frac{2(b+n/4-b)}{n}=
  \frac{p}{q}(\mu_{j+2}-\mu_j)+\frac{1}{2}
  \in \Z,
\end{equation}
for all $j=0,\ldots,n-1$.
We can easily obtain
\begin{equation*}
  \frac{p}{q}(\mu_{j+2}-\mu_j)+\frac{3}{2}=\frac{p}{q}(\mu_{j+2}-\mu_j)+\frac{2(b+3n/4-b)}{n}\in \Z,
\end{equation*}
for all $j=0,\ldots,n-1$. Therefore, $\Gamma$ has $\PST$ between vertices $b+3n/4$ and $b$. We can find that $\Gamma$ has $\PST$ between vertices $b+n/4$ and $b$ and $\Gamma$ has $\PST$ between vertices $b+3n/4$ and $b$ are equivalent. At this point, we can obtain the following lemma.

\begin{lem}\label{thm::4.2}
  Let $\Gamma=\IOCG_n(\D,\sigma)$ be an integral oriented circulant graph. Then for all $b \in \Z_n$, $\Gamma$ has $\PST$ between vertices $b+n/4$ and $b$ and between vertices $b+n/2$ and $b$ if and only if $\Gamma$ has $\MST$ between vertices $b$, $b+n/4$, $b+n/2$, $b+3n/4$.
\end{lem}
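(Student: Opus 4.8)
The plan is to exploit the circulant (vertex-transitive) structure, by which the existence of $\PST$ between two vertices depends only on their difference in $\Z_n$ (this is visible from \Cref{eq:7}, where $U(t)_{ab}$ depends on $a-b$ alone, and is made precise by the condition in \Cref{thm::3.1}), together with the equivalence between $\PST$ at difference $n/4$ and at difference $3n/4$ established in the paragraph immediately preceding the lemma. Since, by definition, $\Gamma$ having $\MST$ between the four vertices $b$, $b+n/4$, $b+n/2$, $b+3n/4$ means precisely that $\PST$ occurs between each of the $\binom{4}{2}=6$ pairs, the entire argument reduces to matching the differences of these pairs against the two hypotheses.

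The backward implication is immediate: if $\Gamma$ has $\MST$ between the four vertices for every $b$, then in particular it has $\PST$ between $b$ and $b+n/4$ and between $b$ and $b+n/2$, which is exactly the left-hand side.

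For the forward implication, I would list the six pairs together with their differences. Three pairs, namely $\{b,b+n/4\}$, $\{b+n/4,b+n/2\}$, and $\{b+n/2,b+3n/4\}$, have difference $n/4$; two pairs, $\{b,b+n/2\}$ and $\{b+n/4,b+3n/4\}$, have difference $n/2$; and the single remaining pair $\{b,b+3n/4\}$ has difference $3n/4$. Because the hypothesis that $\Gamma$ has $\PST$ between $b+n/4$ and $b$ is assumed for \emph{every} $b\in\Z_n$, substituting the base vertex appropriately yields $\PST$ across each difference-$n/4$ pair; likewise the hypothesis for difference $n/2$ covers the two difference-$n/2$ pairs. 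The only difference not directly supplied is $3n/4$, and for this I would invoke the equivalence recorded just before the lemma, namely that $\PST$ between $b+3n/4$ and $b$ is equivalent to $\PST$ between $b+n/4$ and $b$, since via \Cref{cor::4.1} with $k=2$ both reduce to $\frac{p}{q}(\mu_{j+2}-\mu_j)+\frac12\in\Z$ up to an integer shift. Hence the pair $\{b,b+3n/4\}$ also enjoys $\PST$, all six pairs have $\PST$, and $\Gamma$ has $\MST$.

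This is a verification rather than a computation, so I do not expect a substantial obstacle. The one point demanding care is to confirm that the paper's definition of $\MST$ asks for nothing beyond pairwise $\PST$ (in particular, no common transfer time), so that covering the three distinct differences $n/4$, $n/2$, and $3n/4$ is sufficient; once this is granted, the previously established equivalence between the $n/4$ and $3n/4$ cases supplies the only non-routine ingredient.
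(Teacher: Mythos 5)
Your proposal is correct and follows essentially the same route as the paper: the paper's justification (the paragraph immediately preceding the lemma) likewise reduces $\MST$ to pairwise $\PST$ among the four vertices and disposes of the only new difference, $3n/4$, via its equivalence with difference $n/4$ obtained from \Cref{cor::4.1} with $k=2$. Your explicit enumeration of the six pairs and appeal to the ``for all $b$'' quantifier merely spells out what the paper leaves implicit.
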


Suppose that $\Gamma$ has $\PST$ between vertices $b+n/2$ and $b$. By~\Cref{thm::4.2}, if we want to determine the existence of $\MST$ in $\Gamma$, then we only need to satisfy that $\Gamma$ has $\PST$ between vertices $b+n/4$ and $b$.
Similar to the proof of \Cref{lem::3.2}, we can obtain \Cref{thm::4.4}.

\begin{lem}\label{thm::4.4}
  Let $\Gamma=\IOCG_n(\D,\sigma)$ be an integral oriented circulant graph. Suppose that for all $b \in \Z_n$, $\Gamma$ has $\PST$ between vertices $b+n/2$ and $b$. Then for all $b \in \Z_n$, $\Gamma$ has $\PST$ between vertices $b+n/4$ and $b$ if and only if there exists a
  number $m' \in \N$ such that
  \begin{equation}\label{eq:b}
    \vartheta_2(\mu_{j+2}-\mu_j)=m',
  \end{equation}
  for all $j=0,1,\ldots, n-1$.
\end{lem}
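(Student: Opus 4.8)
The plan is to run the argument of \Cref{lem::3.2} essentially verbatim, replacing the single index step by a double one and the constant $\tfrac{a-b}{n}=\tfrac12$ by the one produced by \Cref{cor::4.1} with $k=2$ and $a-b=n/4$, namely $\tfrac{2(n/4)}{n}=\tfrac12$ (this is precisely the computation in \Cref{eq:001}). Accordingly I would first write each consecutive even-step difference as $\mu_{j+2}-\mu_j=2^{\beta_j}m_j'$, where $\beta_j=\vartheta_2(\mu_{j+2}-\mu_j)$ and $m_j'$ is an odd integer, keeping in mind that this factorisation presupposes $\mu_{j+2}-\mu_j\neq 0$.

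For the forward direction, suppose $\Gamma$ has $\PST$ between $b+n/4$ and $b$ for all $b$. By \Cref{cor::4.1} (equivalently, by \Cref{eq:001}) there are coprime integers $p,q$ with $\tfrac{p}{q}(\mu_{j+2}-\mu_j)+\tfrac12\in\Z$ for every $j$. The first thing I would record is that this already forces $\mu_{j+2}-\mu_j\neq 0$ for all $j$: were some difference zero, the left-hand side would equal $\tfrac12\notin\Z$. Rewriting the condition as $\bigl(2^{\beta_j+1}\tfrac{p}{q}m_j'+1\bigr)/2\in\Z$ shows that $2^{\beta_j+1}\tfrac{p}{q}m_j'$ is an odd integer for each $j$. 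Comparing $2$-adic valuations on both sides of $p\,2^{\beta_j+1}m_j'=(\text{odd})\,q$ and using $\gcd(p,q)=1$ then forces $p$ to be odd and $\vartheta_2(q)=\beta_j+1$; in particular $\beta_j=\vartheta_2(q)-1$ is independent of $j$, so \Cref{eq:b} holds with $m'=\vartheta_2(q)-1$.

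For the converse, assuming $\vartheta_2(\mu_{j+2}-\mu_j)=m'$ for all $j$, I would simply exhibit the witnessing pair $(p,q)=(1,2^{m'+1})$: then $\gcd(p,q)=1$ and
\[
  \frac{\mu_{j+2}-\mu_j}{2^{m'+1}}+\frac12=\frac{m_j'+1}{2}\in\Z
\]
because $m_j'$ is odd, so \Cref{cor::4.1} with $k=2$ yields $\PST$ between $b+n/4$ and $b$ for every $b$.

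The one genuinely new point compared with \Cref{lem::3.2}, and the only place where I expect to have to be careful, is the treatment of vanishing differences: under the standing hypothesis that $\Gamma$ has $\PST$ between $b+n/2$ and $b$ we know from \Cref{thm::1.6,lem::3.5} that $\mu_{j+2}-\mu_j=\pm 4$ for odd $j$, but no such a priori control is available for even $j$, so the factorisation $\mu_{j+2}-\mu_j=2^{\beta_j}m_j'$ is not automatically legitimate. The clean way around this is the remark made above: the inhomogeneous term $\tfrac12$ makes a zero difference immediately inconsistent with $\PST$, so the forward direction rules out $\mu_{j+2}-\mu_j=0$ for free, while in the converse a finite prescribed valuation $m'$ already encodes nonvanishing. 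The hypothesis on the $n/2$-transfer is not otherwise needed for the equivalence itself; it merely fixes the context (via \Cref{thm::1.6} it is the statement $\D_2=\{n/4\}$) in which this lemma will be combined with \Cref{thm::4.2} to detect $\MST$.
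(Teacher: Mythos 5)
Your proposal is essentially the paper's own proof: the paper disposes of this lemma with the single remark that it follows ``similar to the proof of \Cref{lem::3.2}'', and what you wrote is exactly that adaptation --- invoking \Cref{cor::4.1} with $k=2$ and $a-b=n/4$ so that the inhomogeneous term is $\tfrac12$ (as in \Cref{eq:001}), factoring $\mu_{j+2}-\mu_j=2^{\beta_j}m_j'$, comparing $2$-adic valuations to force $\beta_j=\vartheta_2(q)-1$ independent of $j$, and exhibiting $(p,q)=(1,2^{m'+1})$ for the converse. Your explicit treatment of possibly vanishing differences (ruled out in the forward direction by the term $\tfrac12$, and encoded in the converse by the finiteness of $m'$) is a point the paper leaves implicit both here and in \Cref{lem::3.2}, so the proposal is, if anything, slightly more careful than the original.
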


\begin{lem}\label{thm::4.3}
  Let $\Gamma=\IOCG_n(\D,\sigma)$ be an integral oriented circulant graph. Then for all $b \in \Z_n$, $\Gamma$ has $\MST$ between vertices $b$, $b+n/4$, $b+n/2$, $b+3n/4$ if and only if
  \begin{equation*}
    \vartheta_2(\mu_{j+1}-\mu_j)=1 \text{~and~} \vartheta_2(\mu_{j+2}-\mu_j)=2,
  \end{equation*}
  for all $j=0,1,\ldots, n-1$.
\end{lem}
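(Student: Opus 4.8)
The plan is to reduce the four-vertex $\MST$ to the two pairwise $\PST$ conditions that have already been characterized, and then to pin down the two $2$-adic valuations to the exact values $1$ and $2$. By \Cref{thm::4.2}, for all $b\in\Z_n$ the graph $\Gamma$ has $\MST$ between $b,b+n/4,b+n/2,b+3n/4$ if and only if $\Gamma$ simultaneously has $\PST$ between $b+n/2$ and $b$ and between $b+n/4$ and $b$ (for all $b$). So I would split the argument into these two factors and translate each into a statement about $\vartheta_2(\mu_{j+1}-\mu_j)$ and $\vartheta_2(\mu_{j+2}-\mu_j)$ respectively.

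For the first factor I would invoke \Cref{lem::3.2}: $\PST$ between $b+n/2$ and $b$ for all $b$ is equivalent to the existence of a constant $m\in\N$ with $\vartheta_2(\mu_{j+1}-\mu_j)=m$ for all $j$. Combining \Cref{lem::3.2} with \Cref{thm::1.6}, this is in turn equivalent to $\D_2=\{n/4\}$; and in that case \Cref{lem::3.5} forces the constant to be $m=1$, since for every $j$ exactly one of $\mu_j,\mu_{j+1}$ lies in $4\Z$ while the other equals $\pm 2\sigma(n/4)$, whence $\mu_{j+1}-\mu_j\in 4\Z\pm 2$ and $\vartheta_2(\mu_{j+1}-\mu_j)=1$. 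Thus the first factor is exactly the condition $\vartheta_2(\mu_{j+1}-\mu_j)=1$ for all $j$.

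For the second factor I would work under the standing assumption $\D_2=\{n/4\}$ (guaranteed by the first factor) and apply \Cref{thm::4.4}: $\PST$ between $b+n/4$ and $b$ for all $b$ is equivalent to the existence of a constant $m'\in\N$ with $\vartheta_2(\mu_{j+2}-\mu_j)=m'$ for all $j$. To identify $m'$ I would evaluate the difference at an odd index $j$: by \Cref{lem::3.5} one has $\mu_j=2\sigma(n/4)(-1)^{(j+1)/2}$ and $\mu_{j+2}=-\mu_j$, so $\mu_{j+2}-\mu_j=-4\sigma(n/4)(-1)^{(j+1)/2}$ has $\vartheta_2=2$. Since the valuation is constant in $j$, the constant must equal $2$; hence the second factor is exactly $\vartheta_2(\mu_{j+2}-\mu_j)=2$ for all $j$. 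Assembling the two factors yields the stated equivalence.

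The only genuinely delicate point is pinning the constants to the specific values $1$ and $2$ rather than leaving them as unspecified $m,m'$: this is what forces me to feed $\D_2=\{n/4\}$ back into the explicit eigenvalue description of \Cref{lem::3.5} and to evaluate the differences on odd indices. Everything else is a routine chaining of the earlier lemmas; I would only need to check the edge cases $j=0$ and the indices that wrap around modulo $n$, where $\mu_0=0\in 4\Z$ keeps the computation consistent with \Cref{lem::3.5}.
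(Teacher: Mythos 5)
Your proposal is correct and follows essentially the same route as the paper's own proof: reduce the four-vertex $\MST$ to the two pairwise $\PST$ conditions via \Cref{thm::4.2}, pin $\vartheta_2(\mu_{j+1}-\mu_j)=1$ through \Cref{lem::3.2}, \Cref{thm::1.6} and \Cref{lem::3.5}, and pin $\vartheta_2(\mu_{j+2}-\mu_j)=2$ by evaluating \Cref{lem::3.5} at odd indices and invoking the constancy from \Cref{thm::4.4}. Your write-up is in fact slightly more careful than the paper's in making the constancy-plus-evaluation argument explicit, but the decomposition and the key lemmas are identical.
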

\begin{proof}
  $(\Rightarrow:)$
Since $\Gamma$ has $\PST$ between vertices $b+n/2$ and $b$, according to the proof of~\Cref{thm::1.6}, we have $\vartheta_2(\mu_{j+1}-\mu_j)=1$ for all $j=0,1,\ldots, n-1$. It remains to prove  $\vartheta_2(\mu_{j+2}-\mu_j)=2$ for all $j=0,1,\ldots, n-1$. By~\Cref{thm::1.6}, we obtain $n\in 4\N$ and $\D_2=\{n/4\}$. Furthermore, by~\Cref{lem::3.5}, we have $\lvert\mu_{j+2}-\mu_{j}\rvert=4$ whenever $j$ is odd, and hence $\vartheta_2(\mu_{j+2}-\mu_j)=2$ whenever $j$ is odd. Since $\Gamma$ also has $\PST$ between vertices $b+n/4$ and $b$, by~\Cref{thm::4.4}, we conclude that $\vartheta_2(\mu_{j+2}-\mu_j)=2$ for all $j=0,1,\ldots, n-1$, as desired.

  $(\Leftarrow:)$ If $\vartheta_2(\mu_{j+1}-\mu_j)=1$ and $\vartheta_2(\mu_{j+2}-\mu_j)=2$ for all $j=0,1,\ldots, n-1$, by~\Cref{lem::3.2} and~\Cref{thm::4.4}, then $\Gamma$ has $\PST$ between vertices $b+n/2$ and $b$, and between vertices $b+n/4$ and $b$. Then it follows from~\Cref{thm::4.2}  that $\Gamma$ has $\MST$ between vertices $b$, $b+n/4$, $b+n/2$, $b+3n/4$.
\end{proof}

Let $\Gamma=\IOCG_n(\D,\sigma)$ be an integral oriented circulant graph. By~\Cref{thm::1.5}, the eigenvalues of $\Gamma$ are $\mu_j=  4 \sum_{d\in \D_3}\sigma(d) (-1)^{(\frac{n}{8d}-1)/2} (-1)^{(\frac{j}{2}+1)/2} c_{\frac{n}{8d}} (j/2)$ for $j/2 \in 2\N+1$. By~\Cref{lem::3.3}, we can obtain \Cref{lem::3.4}.

\begin{lem}\label{lem::4.4}
  Let $n\equiv 0 \pmod 4$. Then $\mu_j/4$ have the same parity for $j/2 \in 2\N+1$ if and only if $\D_3=\{n/8\}$.
\end{lem}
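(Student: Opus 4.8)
The plan is to mirror the (implicit) argument behind \Cref{lem::3.4}, replacing the index $i=2$ by $i=3$ and the argument $j$ by $q=j/2$, and then to invoke \Cref{lem::3.3}. First I would start from the $i=3$ instance of \Cref{thm::1.5}, which gives, for $j/2\in 2\N+1$, the integer
\[
  \frac{\mu_j}{4}=\sum_{d\in\D_3}\sigma(d)(-1)^{(\frac{n}{8d}-1)/2}(-1)^{(\frac{j}{2}+1)/2}\,c_{\frac{n}{8d}}(j/2).
\]
The key observation is that each coefficient $\sigma(d)(-1)^{(\frac{n}{8d}-1)/2}$ and the global factor $(-1)^{(j/2+1)/2}$ lie in $\{1,-1\}$; since neither a sign change of individual summands nor an overall negation affects parity (exactly as recorded at the start of the proof of \Cref{lem::3.3}), the integer $\mu_j/4$ has the same parity as $\sum_{d\in\D_3}c_{\frac{n}{8d}}(j/2)$ for every admissible $j$.

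Next I would identify the moduli appearing in this reduced sum. By definition $\D_3=\{d\mid n/4:\vartheta_2(n/d)=3\}$, so for each $d\in\D_3$ the quotient $n/(8d)$ is an odd positive integer, and the assignment $d\mapsto n/(8d)$ maps $\D_3$ bijectively onto its image $D\subseteq 2\N+1$. Writing $q=j/2$, which is odd and realizes every odd value in the relevant window as $j$ runs over $\{0,\dots,n-1\}$ with $j/2$ odd, the parity question becomes whether $\sum_{m\in D}c_m(q)$ has constant parity for all odd $q$.

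Finally I would apply \Cref{lem::3.3} to the odd-integer set $D$ (the factor $(-1)^k$ there absorbing the discarded signs): $\sum_{m\in D}c_m(q)$ is of constant parity over odd $q$ if and only if $D=\{1\}$. Since $D=\{n/(8d):d\in\D_3\}$, the condition $D=\{1\}$ says precisely that $\D_3$ consists of a single element $d$ with $n/(8d)=1$, i.e.\ $d=n/8$, which is the same as $\D_3=\{n/8\}$; this gives the claimed equivalence.

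I expect the main obstacle to be the parity-reduction step: one must argue cleanly that the per-$d$ signs $\sigma(d)(-1)^{(\frac{n}{8d}-1)/2}$ together with the $j$-dependent global sign $(-1)^{(j/2+1)/2}$ can all be discarded when comparing parities, so that \Cref{lem::3.3} (stated for $\sum_{d\in\D}(-1)^{k}c_d(q)$) applies to the unsigned sum $\sum_{m\in D}c_m(q)$. A secondary point to verify is that the range of $q=j/2$ is wide enough to realize the specific test values (a prime divisor and a coprime argument) used inside the proof of \Cref{lem::3.3}; since the moduli in $D$ are at most $n/8$ while $q$ runs up to $n/2-1$, this causes no difficulty.
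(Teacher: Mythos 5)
Your proposal is correct and takes essentially the same route as the paper, which obtains \Cref{lem::4.4} in exactly this way: specialize \Cref{thm::1.5} to $i=3$ (giving $\mu_j=4\sum_{d\in\D_3}\sigma(d)(-1)^{(\frac{n}{8d}-1)/2}(-1)^{(j/2+1)/2}c_{\frac{n}{8d}}(j/2)$ for $j/2\in 2\N+1$), discard the signs as parity-irrelevant, and apply \Cref{lem::3.3} to the odd moduli $n/(8d)$, $d\in\D_3$. Your write-up merely makes explicit the details the paper leaves implicit (the sign-discarding step, the bijection $d\mapsto n/(8d)$, and the range of $q=j/2$).
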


For $\D_2=\{n/4\}$ and $\D_3=\{n/8\}$, by \Cref{thm::1.5}, we extract the main features of the eigenvalues to obtain \Cref{lem::4.5}.
\begin{lem}\label{lem::4.5}
  Let $\Gamma=\IOCG_n(\D,\sigma)$ be an integral oriented circulant graph. If $\D_2=\{n/4\}$ and $\D_3=\{n/8\}$, then we have
  \begin{equation*}
    \mu_j\in
    \begin{cases}
      8\Z,                          & \text{if $j/2^{i-2}$ is odd and $i\geq 4$,} \\
      4\sigma(n/8)(-1)^{(j/2+1)/2}, & \text{if $j/2$ is odd,}                     \\
      2\sigma(n/4)(-1)^{(j+1)/2},   & \text{if $j$ is odd,}                       \\
      0,                            & \text{otherwise,}
    \end{cases}
  \end{equation*}
  for $2\leq i \leq \vartheta_2(n)$, $0\leq j \leq n-1$, where $\D=\bigcup_{i=2}^{\vartheta_2(n)}\D_i \subseteq \left\{ d: d\mid n/4\right\}$.
\end{lem}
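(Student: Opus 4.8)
The plan is to substitute the hypotheses $\D_2=\{n/4\}$ and $\D_3=\{n/8\}$ directly into the eigenvalue formula of \Cref{thm::1.5} and to organize the computation according to the $2$-adic valuation of the index $j$. The crucial structural observation is that, for each fixed $j$ with $0\le j\le n-1$, the expression in \Cref{thm::1.5} is governed by a single value of $i$, namely the unique $i$ for which $j/2^{i-2}$ is odd; explicitly $i=\vartheta_2(j)+2$ whenever $j\ne 0$. Hence $\mu_j$ reduces to a sum over the single block $\D_i$, and the case distinction of the lemma is exactly the case distinction on $\vartheta_2(j)$.

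First I would treat $j$ odd, so that $i=2$ and the sum runs over $\D_2=\{n/4\}$. The only contributing divisor is $d=n/4=n/2^2$, for which $\frac{n}{2^2 d}=1$; by \Cref{prop::2.2}(i) the Ramanujan sum collapses to $c_1(j)=1$, while the sign factor $(-1)^{(\frac{n}{2^2 d}-1)/2}=(-1)^0=1$ disappears. Together with $2^{i-1}=2$ this yields $\mu_j=2\sigma(n/4)(-1)^{(j+1)/2}$. The case $j/2$ odd, so that $i=3$ and $\D_3=\{n/8\}$, is entirely parallel: now $d=n/8=n/2^3$ forces $\frac{n}{2^3 d}=1$, again $c_1(j/2)=1$, the sign factor is trivial, and $2^{i-1}=4$ gives $\mu_j=4\sigma(n/8)(-1)^{(j/2+1)/2}$.

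Next I would handle the case $4\mid j$ with $j\ne 0$, so that the relevant index satisfies $i\ge 4$ whenever it lies in the admissible range $2\le i\le\vartheta_2(n)$. Here I make no assumption on $\D_i$; I use only that the whole expression carries the prefactor $2^{i-1}\ge 2^3=8$ and that each Ramanujan sum $c_{n/2^i d}(j/2^{i-2})$ is an integer. Consequently every summand lies in $8\Z$, and hence so does $\mu_j$; the same conclusion holds trivially (the value is $0\in 8\Z$) when $\D_i$ is empty or when $i$ exceeds $\vartheta_2(n)$. Finally, the remaining possibility is $j=0$, where $\mu_0=\ii\sum_{k\in\mathcal{C}}(\omega_n^{0}-\omega_n^{0})=0$, accounting for the ``otherwise'' line.

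I expect no serious obstacle here, since the argument is a guided substitution rather than a genuine inequality or existence proof; the only delicate points are bookkeeping ones. In particular I must verify that the sign factors $(-1)^{(\frac{n}{2^i d}-1)/2}$ genuinely vanish for the distinguished divisors $d=n/2^i$, which is immediate once one checks $\frac{n}{2^i d}=1$, and that the index $i$ is correctly matched to $\vartheta_2(j)$ so that exactly one block $\D_i$ is selected for each $j$. Keeping these alignments straight, together with confirming that the $8\Z$ membership in the high-valuation case needs only $2^{i-1}\ge 8$ and the integrality of the Ramanujan sums, completes the verification.
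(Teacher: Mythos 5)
Your proposal is correct and takes essentially the same route as the paper: \Cref{lem::4.5} is obtained there exactly by substituting $\D_2=\{n/4\}$ and $\D_3=\{n/8\}$ into the eigenvalue formula of \Cref{thm::1.5}, with the case split governed by $\vartheta_2(j)$. The paper leaves this ``extraction'' implicit, whereas you spell out the bookkeeping (the collapse $c_1(\cdot)=1$, the trivial sign factors for $d=n/2^i$, and the $8\Z$ membership via the prefactor $2^{i-1}$ and integrality of Ramanujan sums), all of which checks out.
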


\begin{proof}[\bf Proof of \Cref{thm::1.7}]
  By~\Cref{thm::4.2} and \Cref{thm::1.6}, $\D_2=\{n/4\}$ is a necessary condition for the existence of $\MST$ on integral oriented circulant graphs. Next, let $\D_2=\{n/4\}$. We prove that $\D_3= \{n/8\}$.
  \ref{thm1.2.1} $\Rightarrow$ \ref{thm1.2.2}
  Suppose that $\D_3\neq\{n/8\}$. By~\Cref{lem::4.4}, then $\mu_{j}/4$ have no the same parity for all $j/2 \in 2\N+1$. This implies that $(\mu_{j+1}-\mu_{j})/4$ have no the same parity for all $j/2 \in 2\N+1$. At this point, \Cref{eq:b} does not hold. By~\Cref{thm::4.4} and~\ref{thm::4.3}, $\Gamma$ has no $\MST$, a contradiction.

  \ref{thm1.2.1} $\Leftarrow$ \ref{thm1.2.2}
  If $\D_3= \{n/8\}$, then we have $n\in 8\N$. By \Cref{lem::4.5},
  then $\lvert\mu_{j+2}-\mu_{j}\rvert=4$ for all $j$ is odd, and
  \[
    \lvert\mu_{j+2}-\mu_{j}\rvert\in
    \begin{cases}
      4(2\Z\pm 1), & \text{if $j/2^{i-2}$ is odd,} \\
      4,           & \text{otherwise,}
    \end{cases}
  \]
  for all $j$ is even. This implies that $\vartheta_2(\mu_{j+2}-\mu_{j})=2$ for all $0 \leq j\leq n-1$. By~\Cref{thm::4.3}, $\Gamma$ has $\MST$ between vertices $b$, $b+n/4$, $b+n/2$, $b+3n/4$.
\end{proof}

By~\Cref{thm::1.7}, we can obtain that $\MST$ on integral oriented circulant graphs only occurs between four vertices and $n\in 8\N$. Therefore, we have the following corollary.

\begin{cor}
  Let $\Gamma=\IOCG_n(\D,\sigma)$ be an integral oriented circulant graph. Then $\Gamma$ has no $\UST$.
\end{cor}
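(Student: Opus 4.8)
The plan is to argue by contradiction, playing against each other the two structural facts already proved: the (label-free) lemma at the start of this section showing that any $\PST$ pair $a,b$ must satisfy $a=b+kn/4$ with $k\in\{1,2,3\}$, i.e. $a-b\in\{n/4,n/2,3n/4\}$, and the $\MST$ characterization of \Cref{thm::1.7}. Before invoking these, I would dispose of the degenerate possibility $\mathcal{C}=\emptyset$: there $H_\Gamma=0$, so $U(t)=I$ for every $t$ and $U(t)\mathbf{e}_u=\mathbf{e}_u$ can never equal $\gamma\mathbf{e}_v$ for $u\neq v$; hence the empty graph has no $\PST$ whatsoever, a fortiori no $\UST$. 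Thus I may assume $\Gamma$ is non-empty, so that $n\equiv 0\pmod 4$ and the $\IOCG_n(\D,\sigma)$ framework applies.

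Now suppose toward a contradiction that $\Gamma$ has $\UST$, that is, $\PST$ between every pair of distinct vertices. I would extract two incompatible numerical consequences. On the one hand, $\UST$ yields in particular $\PST$ between each pair among the four vertices $b,\ b+n/4,\ b+n/2,\ b+3n/4$, for every $b\in\Z_n$; this is precisely $\MST$ on that quadruple, so \Cref{thm::1.7} forces $n\in 8\N$, hence $n\geq 8$. On the other hand, $\UST$ yields $\PST$ between the vertices $0$ and $1$; by the lemma restricting admissible $\PST$ differences, we must have $1\in\{n/4,n/2,3n/4\}$ as a residue in $\Z_n$, and since $n\equiv 0\pmod 4$ the only solution is $n/4=1$, i.e. $n=4$ (the alternatives $n/2=1$ and $3n/4=1$ have no admissible integer solution with $4\mid n$).

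The two conclusions $n\geq 8$ and $n=4$ are contradictory, so $\Gamma$ cannot have $\UST$. I expect the only delicate point to be the boundary value $n=4$: the difference restriction alone merely caps the number of $\PST$-partners of any vertex at three and so only gives $n\le 4$, leaving $n=4$ open because there the three admissible differences $n/4,n/2,3n/4$ exhaust all of $\Z_4\setminus\{0\}$. It is exactly the divisibility clash between $n=4$ and the requirement $n\in 8\N$ coming from the $\MST$ characterization that eliminates this last case, so the whole argument reduces to combining two previously established theorems rather than to any new computation.
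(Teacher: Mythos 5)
Your proposal is correct and follows essentially the same route as the paper: the paper's one-line justification likewise combines the lemma restricting $\PST$ pairs to differences in $\{n/4,n/2,3n/4\}$ with the $\MST$ characterization of \Cref{thm::1.7}, which forces $n\in 8\N$, yielding the same contradiction between ``at most four mutually $\PST$-connected vertices'' and ``at least eight vertices.'' Your explicit treatment of the empty graph and of the boundary case $n=4$ merely spells out details the paper leaves implicit.
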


From the above characterization, we can calculate the number of integral oriented circulant graphs of a given order having $\MST$.
\begin{cor}
  Let $\Gamma=\IOCG_n(\D,\sigma)$ be an integral oriented circulant graph. Then the number of $\Gamma$ having $\MST$ is
  \begin{equation*}
    \lvert\Gamma\rvert= 2\times 2 \times 3^{\tau\left( \frac n 4\right) -2\tau\left( \frac{n}{2^{\vartheta_2(n)}}\right)},\quad n\in 8\N,
  \end{equation*}
  where $\tau(n)$ denotes the number of the divisors of $n$.
\end{cor}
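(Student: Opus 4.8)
The plan is to invoke the characterization in \Cref{thm::1.7} to reduce the count to a purely combinatorial enumeration of the admissible pairs $(\D,\sigma)$. By \Cref{thm::1.7}, $\Gamma=\IOCG_n(\D,\sigma)$ has $\MST$ between the four vertices $b$, $b+n/4$, $b+n/2$, $b+3n/4$ for all $b$ if and only if $n\in 8\N$, $\D_2=\{n/4\}$, and $\D_3=\{n/8\}$. Thus I would count the number of sets $\D\subseteq\{d:d\mid n/4\}$ together with sign maps $\sigma:\D\to\{1,-1\}$ meeting these three requirements.

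First I would classify the divisors of $n/4$ according to the $2$-adic valuation $\vartheta_2(n/d)$. Writing $s=\vartheta_2(n)$ and $m_0=n/2^{s}$ for the odd part of $n$, every divisor $d\mid n/4$ has the form $d=2^{a}d'$ with $0\le a\le s-2$ and $d'\mid m_0$, and then $\vartheta_2(n/d)=s-a$. Consequently the divisors $d$ with $\vartheta_2(n/d)=i$ are exactly those with $a=s-i$, that is $d=2^{s-i}d'$ for $d'\mid m_0$; there are precisely $\tau(m_0)=\tau(n/2^{\vartheta_2(n)})$ of them for each admissible $i$ in the range $2\le i\le s$. In particular the families realizing $i=2$ and $i=3$ each contain $\tau(n/2^{\vartheta_2(n)})$ divisors, and $n/4=2^{s-2}m_0$, $n/8=2^{s-3}m_0$ are their respective members corresponding to $d'=m_0$.

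Next I would analyze the three constraints. The condition $\D_2=\{n/4\}$ forces the single divisor $n/4$ into $\D$ and excludes the other $\tau(n/2^{\vartheta_2(n)})-1$ divisors in the $i=2$ family, leaving only the freedom to pick $\sigma(n/4)\in\{1,-1\}$, i.e.\ a factor of $2$. Likewise $\D_3=\{n/8\}$ (which presupposes $n\in 8\N$, so that $n/8$ is an integer and the $i=3$ family is nonempty) contributes another factor of $2$ from the choice of $\sigma(n/8)$. All the remaining divisors of $n/4$ are those with $\vartheta_2(n/d)\ge 4$; their number is $\tau(n/4)-2\tau(n/2^{\vartheta_2(n)})$, since the $i=2$ and $i=3$ families jointly account for $2\tau(n/2^{\vartheta_2(n)})$ divisors. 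These carry no constraint: each may be omitted from $\D$, included with $\sigma=1$, or included with $\sigma=-1$, giving $3$ independent choices per divisor.

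Finally I would multiply the independent contributions, a factor $2$ for $\sigma(n/4)$, a factor $2$ for $\sigma(n/8)$, and a factor $3$ for each of the $\tau(n/4)-2\tau(n/2^{\vartheta_2(n)})$ free divisors, yielding
\[
  \lvert\Gamma\rvert=2\times 2\times 3^{\tau(n/4)-2\tau(n/2^{\vartheta_2(n)})},\qquad n\in 8\N,
\]
exactly as in the preceding corollary's Binomial Theorem argument. The main point requiring care is the divisor bookkeeping: verifying that each $2$-adic valuation class $\vartheta_2(n/d)=i$ contributes exactly $\tau(n/2^{\vartheta_2(n)})$ divisors, that the $i=2$ and $i=3$ classes are disjoint, and that both are entirely removed from the free count, so that the exponent is genuinely $\tau(n/4)-2\tau(n/2^{\vartheta_2(n)})$ (consistent with $\tau(n/4)=(s-1)\tau(m_0)$) rather than an over- or under-counted quantity.
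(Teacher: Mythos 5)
Your proposal is correct and follows essentially the same route as the paper: invoke \Cref{thm::1.7} to reduce the problem to counting pairs $(\D,\sigma)$ with $n\in 8\N$, $\D_2=\{n/4\}$, $\D_3=\{n/8\}$, then take a factor $2$ for each of the forced sign choices $\sigma(n/4)$, $\sigma(n/8)$ and a factor $3$ for every unconstrained divisor (the paper packages this last step as the binomial identity $\sum_{k}\binom{N}{k}2^{k}=3^{N}$, while you state the three options per divisor directly). Your explicit $2$-adic classification showing that each valuation class $\vartheta_2(n/d)=i$ contains exactly $\tau\left(n/2^{\vartheta_2(n)}\right)$ divisors is in fact a tidier justification of the exponent $\tau\left(\frac n4\right)-2\tau\left(\frac{n}{2^{\vartheta_2(n)}}\right)$ than the paper's description of its free set $\widetilde{\D}$, but the underlying argument is the same.
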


\begin{proof}
  Based on mapping $\sigma$. For $n\in
    8\N$,  $d=n/4$ and $d=n/8$ have two choices respectively, the cardinality of the set $\widetilde{\D}=\{d : d \mid n/4,\ n/d\in
    8\N\}\setminus\{\D_2,\D_3\}$ is equal to $\tau\left(n/4\right) -2\tau\left( n/2^{\vartheta_2(n)}\right)$, and each $d$ in $\widetilde{\D}$ have two choices. According to the Binomial Theorem (see~\cite[Theorem 5.2.2]{Br10}), we can obtain the result.
\end{proof}

\section{Conclusion}
This work focuses on the study of existence problem for $\PST$ and $\MST$ on integral oriented circulant graphs. We find that there are some nice $\PST$ and $\MST$ properties on integral oriented circulant graphs. $\PST$ and $\MST$ determined by its order $n$ and the set of divisors $\D$, not related to the selection of mapping $\sigma$. For $\PST$ (or $\MST$), we obtain necessary and sufficient condition for the existence of $\PST$ (or $\MST$) on integral oriented circulant graphs. For $\MST$, we prove that there only exists $\MST$ for four vertices on integral oriented circulant graphs.

The following question now arises naturally:
\begin{enumerate}[label = \bf(\roman*)]
  \item Determine non-integer mixed (or oriented)  Cayley (or circulant) graphs having $\PST$ and $\MST$.
  \item Determine integral (weighted) mixed circulant graphs having $\PST$ and $\MST$. According to the calculation, we find that there are many $\PST$ on integral mixed circulant graphs.
  \item Determine the mixed Cayley graphs with $\PST$ for some kinds of groups.
  \item So far, we only find that $\MST$ can occur between three vertices and four vertices. Is there a $\MST$ with more than four vertices?
\end{enumerate}

%\section{Acknowledgments}

\bibliographystyle{elsarticle-num-names-nourl}

\bibliography{IOCG.bib}

\begin{thebibliography}{33}
\expandafter\ifx\csname natexlab\endcsname\relax\def\natexlab#1{#1}\fi
\providecommand{\url}[1]{\texttt{#1}}
\providecommand{\href}[2]{#2}
\providecommand{\path}[1]{#1}
\providecommand{\DOIprefix}{doi:}
\providecommand{\ArXivprefix}{arXiv:}
\providecommand{\URLprefix}{URL: }
\providecommand{\Pubmedprefix}{pmid:}
\providecommand{\doi}[1]{\href{http://dx.doi.org/#1}{\path{#1}}}
\providecommand{\Pubmed}[1]{\href{pmid:#1}{\path{#1}}}
\providecommand{\bibinfo}[2]{#2}
\ifx\xfnm\relax \def\xfnm[#1]{\unskip,\space#1}\fi
%Type = Article
\bibitem[{Angeles-Canul et~al.(2010)Angeles-Canul, Norton, Opperman, Paribello,
  Russell, and Tamon}]{ANOPRT10}
\bibinfo{author}{R.~J. Angeles-Canul}, \bibinfo{author}{R.~M. Norton},
  \bibinfo{author}{M.~C. Opperman}, \bibinfo{author}{C.~C. Paribello},
  \bibinfo{author}{M.~C. Russell}, \bibinfo{author}{C.~Tamon},
\newblock \bibinfo{title}{Perfect state transfer, integral circulants, and join
  of graphs},
\newblock \bibinfo{journal}{\emph{Quantum Inf. Comput.}}   \textbf{10}~(3-4)
  (\bibinfo{year}{2010}) \bibinfo{pages}{325--342}.
  \DOIprefix\doi{10.26421/QIC10.3-4-10}.
%Type = Article
\bibitem[{Bali\'{n}ska et~al.(2002)Bali\'{n}ska, Cvetkovi\'{c},
  Radosavljevi\'{c}, Simi\'{c}, and Stevanovi\'{c}}]{BCRSS02}
\bibinfo{author}{K.~Bali\'{n}ska}, \bibinfo{author}{D.~Cvetkovi\'{c}},
  \bibinfo{author}{Z.~Radosavljevi\'{c}}, \bibinfo{author}{S.~Simi\'{c}},
  \bibinfo{author}{D.~Stevanovi\'{c}},
\newblock \bibinfo{title}{A survey on integral graphs},
\newblock \bibinfo{journal}{\emph{Univ. Beograd. Publ. Elektrotehn. Fak. Ser.
  Mat.}}   \textbf{13} (\bibinfo{year}{2002}) \bibinfo{pages}{42--65 (2003)}.
  \DOIprefix\doi{10.2298/PETF0213042B}.
%Type = Article
\bibitem[{Ba\v{s}i\'{c}(2013)}]{Ba13}
\bibinfo{author}{M.~Ba\v{s}i\'{c}},
\newblock \bibinfo{title}{Characterization of quantum circulant networks having
  perfect state transfer},
\newblock \bibinfo{journal}{\emph{Quantum Inf. Process.}}   \textbf{12}~(1)
  (\bibinfo{year}{2013}) \bibinfo{pages}{345--364}.
  \DOIprefix\doi{10.1007/s11128-012-0381-z}.
%Type = Article
\bibitem[{Ba\v{s}i\'{c} and Petkovi\'{c}(2009)}]{BP09}
\bibinfo{author}{M.~Ba\v{s}i\'{c}}, \bibinfo{author}{M.~D. Petkovi\'{c}},
\newblock \bibinfo{title}{Some classes of integral circulant graphs either
  allowing or not allowing perfect state transfer},
\newblock \bibinfo{journal}{\emph{Appl. Math. Lett.}}   \textbf{22}~(10)
  (\bibinfo{year}{2009}) \bibinfo{pages}{1609--1615}.
  \DOIprefix\doi{10.1016/j.aml.2009.04.007}.
%Type = Article
\bibitem[{Ba\v{s}i\'{c} et~al.(2009)Ba\v{s}i\'{c}, Petkovi\'{c}, and
  Stevanovi\'{c}}]{BPS09}
\bibinfo{author}{M.~Ba\v{s}i\'{c}}, \bibinfo{author}{M.~D. Petkovi\'{c}},
  \bibinfo{author}{D.~Stevanovi\'{c}},
\newblock \bibinfo{title}{Perfect state transfer in integral circulant graphs},
\newblock \bibinfo{journal}{\emph{Appl. Math. Lett.}}   \textbf{22}~(7)
  (\bibinfo{year}{2009}) \bibinfo{pages}{1117--1121}.
  \DOIprefix\doi{10.1016/j.aml.2008.11.005}.
%Type = Article
\bibitem[{Bose(2003)}]{Bo03}
\bibinfo{author}{S.~Bose},
\newblock \bibinfo{title}{Quantum communication through an unmodulated spin
  chain},
\newblock \bibinfo{journal}{\emph{Phys. Rev. Lett.}}   \textbf{91}
  (\bibinfo{year}{2003}) \bibinfo{pages}{207901}.
  \DOIprefix\doi{10.1103/PhysRevLett.91.207901}.
%Type = Book
\bibitem[{Brualdi(2010)}]{Br10}
\bibinfo{author}{R.~A. Brualdi}, \bibinfo{title}{Introductory combinatorics},
  \bibinfo{edition}{fifth} ed., \bibinfo{publisher}{Pearson Prentice Hall,
  Upper Saddle River, NJ}, \bibinfo{year}{2010}.
%Type = Article
\bibitem[{Cameron et~al.(2014)Cameron, Fehrenbach, Granger, Hennigh, Shrestha,
  and Tamon}]{CFGHST14}
\bibinfo{author}{S.~Cameron}, \bibinfo{author}{S.~Fehrenbach},
  \bibinfo{author}{L.~Granger}, \bibinfo{author}{O.~Hennigh},
  \bibinfo{author}{S.~Shrestha}, \bibinfo{author}{C.~Tamon},
\newblock \bibinfo{title}{Universal state transfer on graphs},
\newblock \bibinfo{journal}{\emph{Linear Algebra Appl.}}   \textbf{455}
  (\bibinfo{year}{2014}) \bibinfo{pages}{115--142}.
  \DOIprefix\doi{10.1016/j.laa.2014.05.004}.
%Type = Article
\bibitem[{Cao et~al.(2020)Cao, Chen, and Ling}]{CCL20}
\bibinfo{author}{X.~Cao}, \bibinfo{author}{B.~Chen}, \bibinfo{author}{S.~Ling},
\newblock \bibinfo{title}{Perfect state transfer on {C}ayley graphs over
  dihedral groups: the non-normal case},
\newblock \bibinfo{journal}{\emph{Electron. J. Combin.}}   \textbf{27}~(2)
  (\bibinfo{year}{2020}) \bibinfo{pages}{\#P2.28}.
  \DOIprefix\doi{10.37236/9184}.
%Type = Article
\bibitem[{Cao and Feng(2021)}]{CF21}
\bibinfo{author}{X.~Cao}, \bibinfo{author}{K.~Feng},
\newblock \bibinfo{title}{Perfect state transfer on {C}ayley graphs over
  dihedral groups},
\newblock \bibinfo{journal}{\emph{Linear Multilinear Algebra}}
  \textbf{69}~(2) (\bibinfo{year}{2021}) \bibinfo{pages}{343--360}.
  \DOIprefix\doi{10.1080/03081087.2019.1599805}.
%Type = Article
\bibitem[{Cao et~al.(2021)Cao, Feng, and Tan}]{CFT21}
\bibinfo{author}{X.~Cao}, \bibinfo{author}{K.~Feng}, \bibinfo{author}{Y.-Y.
  Tan},
\newblock \bibinfo{title}{Perfect state transfer on weighted abelian {C}ayley
  graphs},
\newblock \bibinfo{journal}{\emph{Chinese Ann. Math. Ser. B}}   \textbf{42}~(4)
  (\bibinfo{year}{2021}) \bibinfo{pages}{625--642}.
  \DOIprefix\doi{10.1007/s11401-021-0283-4}.
%Type = Article
\bibitem[{Connelly et~al.(2017)Connelly, Grammel, Kraut, Serazo, and
  Tamon}]{CGKST17}
\bibinfo{author}{E.~Connelly}, \bibinfo{author}{N.~Grammel},
  \bibinfo{author}{M.~Kraut}, \bibinfo{author}{L.~Serazo},
  \bibinfo{author}{C.~Tamon},
\newblock \bibinfo{title}{Universality in perfect state transfer},
\newblock \bibinfo{journal}{\emph{Linear Algebra Appl.}}   \textbf{531}
  (\bibinfo{year}{2017}) \bibinfo{pages}{516--532}.
  \DOIprefix\doi{10.1016/j.laa.2017.06.015}.
%Type = Book
\bibitem[{Coutinho and Godsil(2021)}]{CG21b}
\bibinfo{author}{G.~Coutinho}, \bibinfo{author}{C.~D. Godsil},
  \bibinfo{title}{Graph spectra and continuous quantum walks},
  \bibinfo{publisher}{In preparation}, \bibinfo{year}{2021}.
%Type = Book
\bibitem[{Godsil(1993)}]{Go93}
\bibinfo{author}{C.~D. Godsil}, \bibinfo{title}{Algebraic combinatorics},
  Chapman and Hall Mathematics Series, \bibinfo{publisher}{Chapman \& Hall, New
  York}, \bibinfo{year}{1993}.
%Type = Article
\bibitem[{Godsil(2011)}]{Go08}
\bibinfo{author}{C.~D. Godsil},
\newblock \bibinfo{title}{Periodic graphs},
\newblock \bibinfo{journal}{\emph{Electron. J. Combin.}}   \textbf{18}~(1)
  (\bibinfo{year}{2011}) \bibinfo{pages}{\#P23}. \DOIprefix\doi{10.37236/510}.
%Type = Article
\bibitem[{Godsil(2012{\natexlab{b}})}]{Go12b}
\bibinfo{author}{C.~D. Godsil},
\newblock \bibinfo{title}{State transfer on graphs},
\newblock \bibinfo{journal}{\emph{Discrete Math.}}   \textbf{312}~(1)
  (\bibinfo{year}{2012}{\natexlab{b}}) \bibinfo{pages}{129--147}.
  \DOIprefix\doi{10.1016/j.disc.2011.06.032}.
%Type = Article
\bibitem[{Godsil(2012{\natexlab{a}})}]{Go12a}
\bibinfo{author}{C.~D. Godsil},
\newblock \bibinfo{title}{When can perfect state transfer occur?},
\newblock \bibinfo{journal}{\emph{Electron. J. Linear Algebra}}   \textbf{23}
  (\bibinfo{year}{2012}{\natexlab{a}}) \bibinfo{pages}{877--890}.
  \DOIprefix\doi{10.13001/1081-3810.1563}.
%Type = Article
\bibitem[{Godsil and Lato(2020)}]{GL20}
\bibinfo{author}{C.~D. Godsil}, \bibinfo{author}{S.~Lato},
\newblock \bibinfo{title}{Perfect state transfer on oriented graphs},
\newblock \bibinfo{journal}{\emph{Linear Algebra Appl.}}   \textbf{604}
  (\bibinfo{year}{2020}) \bibinfo{pages}{278--292}.
  \DOIprefix\doi{10.1016/j.laa.2020.06.025}.
%Type = Article
\bibitem[{Guo and Mohar(2017)}]{GM17}
\bibinfo{author}{K.~Guo}, \bibinfo{author}{B.~Mohar},
\newblock \bibinfo{title}{Hermitian adjacency matrix of digraphs and mixed
  graphs},
\newblock \bibinfo{journal}{\emph{J. Graph Theory}}   \textbf{85}~(1)
  (\bibinfo{year}{2017}) \bibinfo{pages}{217--248}.
  \DOIprefix\doi{10.1002/jgt.22057}.
%Type = Article
\bibitem[{Harary and Palmer(1966)}]{HP66}
\bibinfo{author}{F.~Harary}, \bibinfo{author}{E.~Palmer},
\newblock \bibinfo{title}{Enumeration of mixed graphs},
\newblock \bibinfo{journal}{\emph{Proc. Amer. Math. Soc.}}   \textbf{17}
  (\bibinfo{year}{1966}) \bibinfo{pages}{682--687}.
  \DOIprefix\doi{10.2307/2035390}.
%Type = Inproceedings
\bibitem[{Harary and Schwenk(1974)}]{HS74}
\bibinfo{author}{F.~Harary}, \bibinfo{author}{A.~J. Schwenk},
\newblock \bibinfo{title}{Which graphs have integral spectra?},
\newblock in: \bibinfo{editor}{R.~A. Bari}, \bibinfo{editor}{F.~Harary} (Eds.),
  \bibinfo{booktitle}{Graphs and Combinatorics}, \bibinfo{publisher}{Springer
  Berlin Heidelberg}, \bibinfo{address}{Berlin, Heidelberg},
  \bibinfo{year}{1974}, pp. \bibinfo{pages}{45--51}.
  \DOIprefix\doi{10.1007/BFb0066434}.
%Type = Book
\bibitem[{Hardy and Wright(2008)}]{HW08}
\bibinfo{author}{G.~H. Hardy}, \bibinfo{author}{E.~M. Wright},
  \bibinfo{title}{An introduction to the theory of numbers},
  \bibinfo{edition}{sixth} ed., \bibinfo{publisher}{Oxford University Press,
  Oxford}, \bibinfo{year}{2008}. \bibinfo{note}{Revised by D. R. Heath-Brown
  and J. H. Silverman, With a foreword by Andrew Wiles}.
%Type = Article
\bibitem[{Kadyan and Bhattacharjya(2023)}]{MB21a}
\bibinfo{author}{M.~Kadyan}, \bibinfo{author}{B.~Bhattacharjya},
\newblock \bibinfo{title}{Integral mixed circulant graphs},
\newblock \bibinfo{journal}{\emph{Discrete Math.}}   \textbf{346}~(1)
  (\bibinfo{year}{2023}) \bibinfo{pages}{113142}.
  \DOIprefix\doi{10.1016/j.disc.2022.113142}.
%Type = Article
\bibitem[{Kay(2011)}]{Ka11}
\bibinfo{author}{A.~Kay},
\newblock \bibinfo{title}{Basics of perfect communication through quantum
  networks},
\newblock \bibinfo{journal}{\emph{Phys. Rev. A}}   \textbf{84}
  (\bibinfo{year}{2011}) \bibinfo{pages}{022337}.
  \DOIprefix\doi{10.1103/PhysRevA.84.022337}.
%Type = Masterthesis
\bibitem[{Lato(2019)}]{La19}
\bibinfo{author}{S.~Lato}, \bibinfo{title}{Quantum walks on oriented graphs},
  Master's thesis, UWSpace, \bibinfo{year}{2019}. \URLprefix
  \url{http://hdl.handle.net/10012/14338}.
%Type = Article
\bibitem[{Liu and Li(2015)}]{LL15}
\bibinfo{author}{J.~Liu}, \bibinfo{author}{X.~Li},
\newblock \bibinfo{title}{Hermitian-adjacency matrices and {H}ermitian energies
  of mixed graphs},
\newblock \bibinfo{journal}{\emph{Linear Algebra Appl.}}   \textbf{466}
  (\bibinfo{year}{2015}) \bibinfo{pages}{182--207}.
  \DOIprefix\doi{10.1016/j.laa.2014.10.028}.
%Type = Article
\bibitem[{Liu and Zhou(2022)}]{LZ18}
\bibinfo{author}{X.~Liu}, \bibinfo{author}{S.~Zhou},
\newblock \bibinfo{title}{Eigenvalues of {C}ayley graphs},
\newblock \bibinfo{journal}{\emph{Electron. J. Combin.}}   \textbf{29}~(2)
  (\bibinfo{year}{2022}) \bibinfo{pages}{\#P2.9}.
  \DOIprefix\doi{10.37236/8569}.
%Type = Article
\bibitem[{Mohar(2016)}]{Mo16}
\bibinfo{author}{B.~Mohar},
\newblock \bibinfo{title}{Hermitian adjacency spectrum and switching
  equivalence of mixed graphs},
\newblock \bibinfo{journal}{\emph{Linear Algebra Appl.}}   \textbf{489}
  (\bibinfo{year}{2016}) \bibinfo{pages}{324--340}.
  \DOIprefix\doi{10.1016/j.laa.2015.10.018}.
%Type = Article
\bibitem[{Petkovi\'{c} and Ba\v{s}i\'{c}(2011)}]{PB11}
\bibinfo{author}{M.~D. Petkovi\'{c}}, \bibinfo{author}{M.~Ba\v{s}i\'{c}},
\newblock \bibinfo{title}{Further results on the perfect state transfer in
  integral circulant graphs},
\newblock \bibinfo{journal}{\emph{Comput. Math. Appl.}}   \textbf{61}~(2)
  (\bibinfo{year}{2011}) \bibinfo{pages}{300--312}.
  \DOIprefix\doi{10.1016/j.camwa.2010.11.005}.
%Type = Article
\bibitem[{Saxena et~al.(2007)Saxena, Severini, and Shparlinski}]{SSS07}
\bibinfo{author}{N.~Saxena}, \bibinfo{author}{S.~Severini},
  \bibinfo{author}{I.~E. Shparlinski},
\newblock \bibinfo{title}{Parameters of integral circulant graphs and periodic
  quantum dynamics},
\newblock \bibinfo{journal}{\emph{Int. J. Quantum Inf.}}   \textbf{05}~(03)
  (\bibinfo{year}{2007}) \bibinfo{pages}{417--430}.
  \DOIprefix\doi{10.1142/S0219749907002918}.
%Type = Article
\bibitem[{So(2006)}]{So06}
\bibinfo{author}{W.~So},
\newblock \bibinfo{title}{Integral circulant graphs},
\newblock \bibinfo{journal}{\emph{Discrete Math.}}   \textbf{306}~(1)
  (\bibinfo{year}{2006}) \bibinfo{pages}{153--158}.
  \DOIprefix\doi{10.1016/j.disc.2005.11.006}.
%Type = Article
\bibitem[{Tan et~al.(2019)Tan, Feng, and Cao}]{TFC19}
\bibinfo{author}{Y.-Y. Tan}, \bibinfo{author}{K.~Feng},
  \bibinfo{author}{X.~Cao},
\newblock \bibinfo{title}{Perfect state transfer on abelian {C}ayley graphs},
\newblock \bibinfo{journal}{\emph{Linear Algebra Appl.}}   \textbf{563}
  (\bibinfo{year}{2019}) \bibinfo{pages}{331--352}.
  \DOIprefix\doi{10.1016/j.laa.2018.11.011}.
%Type = Article
\bibitem[{Yuan et~al.(2020)Yuan, Wang, Gong, and Qiao}]{YWGQ20}
\bibinfo{author}{B.-J. Yuan}, \bibinfo{author}{Y.~Wang}, \bibinfo{author}{S.-C.
  Gong}, \bibinfo{author}{Y.~Qiao},
\newblock \bibinfo{title}{On mixed graphs whose {H}ermitian spectral radii are
  at most 2},
\newblock \bibinfo{journal}{\emph{Graphs Combin.}}   \textbf{36}~(5)
  (\bibinfo{year}{2020}) \bibinfo{pages}{1573--1584}.
  \DOIprefix\doi{10.1007/s00373-020-02181-w}.

\end{thebibliography}
\end{document}